\newtheorem{theorem}{Theorem}
\newtheorem{lemma}{Lemma}
\theoremstyle{definition}
\renewenvironment{proof}{
\par\noindent{\it Proof.}} { \mbox{}\hfill $\blacksquare$ \par }
\DeclareMathOperator{\per}{per} \DeclareMathOperator{\SSym}{Sym}
\DeclareMathOperator{\sgn}{sign} \DeclareMathOperator{\dif}{d}
\DeclareMathOperator{\charr}{char} \DeclareMathOperator{\Tr}{Tr}
\DeclareMathOperator{\rank}{rank} \DeclareMathOperator{\Gal}{Gal}
\DeclareMathOperator*{\Coef}{Coef}
\renewcommand{\mod}{\mathop{{\rm mod}}}
\newcommand{\pd}{\partial}
\newcommand{\lam}{\lambda}
\newcommand{\sumn}{\sum\limits_{i=1}^n}
\newcommand{\vep}{\varepsilon}
\newcommand{\ls}{\left(}
\newcommand{\rs}{\right)}
\newcommand{\lk}{\left[}
\newcommand{\rk}{\right]}
\date{}
\begin{document}

\begin{center}
\Large{Determinants and permanents of an arbitrary \\
the Hadamard degree of a Cauchy matrix and proof of a generalization
of a conjecture of R.F.Scott(1881).}
\end{center}
\begin{center}
A.M.Kamenetsky
\end{center}
\vspace{2em}
\begin{center}
Abstract.
\end{center}
{\small In this paper we give the absolutely new proof of a
conjecture of R.F.Scott(1881) on the permanent of a Cauchy matrix
$\ls \frac{1}{x_i-y_j} \rs_{1 \leqslant i,j \leqslant n},$ where
$x_1, \ldots, x_n$ and $y_1, \ldots, y_n$~are the distinct roots of
the polynomials $x^n-1$ and $y^n +1,$ respectively. The simple
formula is given for the permanent of the Cauchy matrix  $A= \ls
\frac{1}{x_i-y_j} \rs_{1 \leqslant i,j \leqslant n},$ where $x_1,
\ldots, x_n$ and  $y_1, \ldots, y_n$~are the distinct roots of the
polynomials $x^n+a$ and $y^n +b$, respectively:
\begin{gather*}
\per (A) =\frac{n}{(b-a)^n}
\prod_{k=1}^{n-1}[nb-k(b-a)] =\\
=\begin{cases}
(-1)^{\frac{n-1}{2}} \cfrac{n}{(b-a)^n}
\prod\limits_{k=1}^{\frac{n-1}{2}}[-na-k(b-a)][nb-k(b-a)],
\mbox{if $n \equiv 1 ( \mod 2)$,} \\
\cfrac{n}{2} \cdot \cfrac{n(a+b)}{(b-a)^n}
\prod\limits_{k=1}^{\frac{n}{2}-1}[na+k(b-a)][nb+k(a-b)], \mbox{if $n\equiv 0(\mod 2)$}.
\end{cases}
\end{gather*}
from which the corrected formula of R.F.Scott follows instantly.
Proof follows from obtained by the author a formula for the
determinant of an arbitrary of the Hadamard degree $m$ of a Cauchy
matrix $A$ and Borchard's theorem.

{\bf Key words}: Cauchy matrix, permanents, determinants, Hadamard degree.}

\vspace{2em}
\begin{center}
{\large 1.~Introduction.}
\end{center}
In 1881 R.F.Scott [1] gave the following result without proof. Let
$$
A=((x_i-y_j)^{-1})_{1\leqslant i ,j \leqslant n }
$$
 be a Cauchy matrix, where
$x_1,\ldots,x_n $ and $y_1, \ldots, y_n$~are the distinct roots of the polynomials
$x^n-1 \mbox{ and } y^n+1$, respectively. Then
\begin{equation}
\per(A)=\begin{cases} n[1\times 3\times 5 \times \ldots \times(n-2)]^2/2^n,
&\text{if $n$ is odd,}\\
0, &\text{if $n$ is even.}
\end{cases}
\end{equation}

In 1979 Henryc Minc [2] first proved the correct result
\begin{equation}
 \per(A)=\begin{cases}
(-1)^{(n-1)/2}n[1\times 3\times 5 \times \ldots \times(n-2)]^2/2^n,&\text{if $n$ is odd,}\\
0, &\text{if $n$ is even.}
\end{cases}
\end{equation}

Thus R.F.Scott guess formula for $\per(A)$ up to a sign. In [3] the proof of the
 formula (2) is the same as in [2]. Both in [3] and in [2] evaluate of the
determinant of the first and the second Hadamard degree foregoing
specialized a Cauchy matrix which is based in fact on the proof of
one and the same trigonometric identity. My proof is based on
absolutely another idea, permitting obtain compact formula for
determinant of an arbitrary Hadamard degree of specialized a Cauchy
matrix if $x_1, x_2, \ldots x_n $ and $y_1, y_2, \ldots y_n $~are
the distinct roots of polynomials $x^n+a \mbox{ and } y^n+b$,
respectively.
\begin{center}
{\large 2.~Results.}
\end{center}
\begin{theorem}
Let $x_1, x_2, \ldots, x_n$~be independent variables over a
field~$K$ of characteristic~0 and let $y_1,y_2, \ldots, y_n \in K$,
$A=((x_i-y_j)^{-1})_{1 \leqslant i, j\leqslant n}$~be a~Cauchy
matrix, let $A^{(m)}=((x_i-y_j)^{-m})_{i \leqslant i, j \leqslant
n}$~be $m$-th Hadamard degree of the Cauchy matrix~$A$,~let $m
\geqslant 1$. Then
\begin{gather}
\det (A^{(m)})=(-1)^{n(m-1)}[(m-1)!]^{-n} \ls \frac{\pd}{\pd x_1}\rs^{m-1}
\ls \frac{\pd}{\pd x_2} \rs^{m-1} \ldots \ls \frac{\pd}{\pd x_n} \rs^{m-1} \det (A), \\
\per (A^{(m)})=(-1)^{n(m-1)}[(m-1)!]^{-n} \ls \frac{\pd}{\pd x_1} \rs ^{m-1}
\ls\frac{\pd}{\pd x_2}\rs^{m-1} \ldots \ls \frac{\pd}{\pd x_n} \rs^{m-1} \per (A).
\end{gather}
\end{theorem}

\begin{proof}
Since
\begin{multline*}
\per(A^{(m)})=\sum_{\sigma \in \SSym (n)} \prod_{i=1}^n (x_i-y_{\sigma(i)})^{-m} \mbox{
and} \det(A^{(m)})=\sum_{\sigma \in \SSym (n)} (\sgn \sigma) \prod_{i=1}^n
(x_i-y_{\sigma(i)})^{-m},
\end{multline*}
then
\begin{multline*}
\ls \frac{\pd}{\pd x_1} \rs ^{m-1} \ls \frac{\pd}{\pd x_2} \rs ^{m-1} \ldots \ls
\frac{\pd}{\pd
x_n}\rs ^{m-1} \per (A) =\\
=\ls \frac{\pd}{\pd x_1} \rs^{m-1} \ls \frac{\pd}{\pd x_2} \rs ^{m-1} \ldots \ls
\frac{\pd}{\pd x_n} \rs^{m-1} \sum_{\sigma \in
\SSym (n)} \prod_{i=1}^n (x_i-y_{\sigma(i)})^{-1}= \\
=\sum_{\sigma \in \SSym (n)} \prod_{i=1}^n \ls \frac{\pd}{\pd x_i}
\rs^{m-1}(x_i-y_{\sigma(i)})^{-1}
=\sum_{\sigma \in \SSym (n)} \prod_{i=1}^n(-1)^{m-1}(m-1)!(x_i-y_{\sigma(i)})^{-m}=\\
=(-1)^{n(m-1)} [(m-1)!]^n \sum_{\sigma \in \SSym (n)} \prod_{i=1}^n
(x_i-y_{\sigma(i)})^{-m}= (-1)^{n(m-1)} [(m-1)!]^n \per(A^{(m)}),
\end{multline*}
\begin{multline*}
\ls \frac{\pd}{\pd x_1}\rs^{m-1} \ls\frac{\pd}{\pd x_2}\rs^{m-1} \ldots \ls\frac{\pd}{\pd
x_n}\rs^{m-1} \det (A) = \\
=\ls\frac{\pd}{\pd x_1}\rs^{m-1} \ls\frac{\pd}{\pd x_2}\rs^{m-1}
\ldots \ls\frac{\pd}{\pd x_n}\rs^{m-1} \sum_{\sigma \in \SSym (n)} (\sgn \sigma) \prod_{i=1}^n (x_i-y_{\sigma(i)})^{-1}= \\
=\sum_{\sigma \in \SSym (n)} (\sgn \sigma) \prod_{i=1}^n
\ls\frac{\pd}{\pd x_i}\rs^{m-1} (x_i-y_{\sigma(i)})^{-1}=\\
= \sum_{\sigma \in \SSym (n)} (\sgn \sigma) \prod_{i=1}^n (-1)^{m-1}(m-1)!(x_i-y_{\sigma(i)})^{-m}=\\
=(-1)^{n(m-1)} [(m-1)!]^n \sum_{\sigma \in \SSym (n)} (\sgn \sigma) \prod_{i=1}^n
(x_i-y_{\sigma(i)})^{-m} =(-1)^{n(m-1)} [(m-1)!]^n \det(A^{(m)}).
\end{multline*}

\end{proof}
\begin{lemma}
Let $K$~be a field and $f(x) \in K[x],$ let $E$~be the splitting
field of $f(x)$ over $K$.
$$
f(x)=\prod_{i=1}^n (x-x_i), \quad g(x)=x^n f(x^{-1}) =\prod_{i=1}^n (1-x_i x), \;
x_i \in E, \; 1 \leqslant i \leqslant n
$$
$ p_k=p_k(x_1,x_2,\ldots,x_n) = \sum_{i=1}^n x_i^k$. Then
\begin{equation}
\sum_{k=1}^{\infty}p_k x^k=-x \ls \frac{\dif}{\dif x} g(x) \rs
(g(x))^{-1}
\end{equation}
\end{lemma}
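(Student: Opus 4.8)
The plan is to establish the identity as an equality in the ring of formal power series $E[[x]]$, where it amounts to a logarithmic‑derivative computation. First I would note that $g(0)=\prod_{i=1}^n(1-x_i\cdot 0)=1$, so $g(x)$ is a unit in $E[[x]]$ and $(g(x))^{-1}$ is well defined as a power series. Working formally in $E[[x]]$ dispenses with any analytic convergence question: all the manipulations below are purely algebraic.

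Next I would compute the logarithmic derivative of $g$ directly from its factorization, deliberately avoiding $\log$ so that the argument stays valid even when $K$ (hence $E$) has positive characteristic. Applying the product rule to $g(x)=\prod_{i=1}^n(1-x_i x)$ gives $\frac{\dif}{\dif x}g(x)=\sum_{i=1}^n(-x_i)\prod_{j\neq i}(1-x_j x)$. Dividing by $g(x)$, each factor $1-x_i x$ cancels except the one differentiated, so $\ls\frac{\dif}{\dif x}g(x)\rs(g(x))^{-1}=\sum_{i=1}^n\frac{-x_i}{1-x_i x}$. Multiplying both sides by $-x$ turns the right‑hand side into $\sum_{i=1}^n\frac{x_i x}{1-x_i x}$, which already matches the left‑hand side of the claimed identity.

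The remaining step is to expand each summand as a geometric series: since $1-x_i x$ is a unit with $(1-x_i x)^{-1}=\sum_{k=0}^\infty x_i^k x^k$, we have $\frac{x_i x}{1-x_i x}=\sum_{k=1}^\infty x_i^k x^k$. Because the outer sum over $i$ is finite, I may freely interchange the two summations in $E[[x]]$ (no analytic justification is required), obtaining $\sum_{i=1}^n\sum_{k=1}^\infty x_i^k x^k=\sum_{k=1}^\infty\ls\sum_{i=1}^n x_i^k\rs x^k=\sum_{k=1}^\infty p_k x^k$, which is exactly the asserted formula.

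I do not anticipate a genuine obstacle: the only points needing care are to treat everything as an identity in $E[[x]]$ (so that both $(g(x))^{-1}$ and the geometric expansions make sense) and to obtain $g'/g$ through the product rule rather than through $\log g$, which keeps the proof characteristic‑free.
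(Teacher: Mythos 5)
Your proof is correct and follows essentially the same route as the paper's: both rest on the identity $\ls\frac{\dif}{\dif x}g(x)\rs(g(x))^{-1}=\sum_{i=1}^n\frac{-x_i}{1-x_i x}$ together with the geometric-series expansion of $(1-x_i x)^{-1}$ and the interchange of the finite sum over $i$ with the sum over $k$. You merely run the computation from right to left instead of left to right, and add the (welcome but not essential) remarks that everything lives in $E[[x]]$ and that the argument is characteristic-free.
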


\begin{proof}
\begin{multline*}
\sum_{k=1}^{\infty} p_k x^k = \sum_{i=1}^n \sum_{k=1}^{\infty}
x_i^k x^k =
\sum_{i=1}^n (1-x_i x)^{-1}x_i x = -x \sumn \frac{-x_i}{1-x_i x} = \\
=-x \sum_{i=1}^n \frac{\frac{\dif}{\dif x} (1-x_i x)}{1-x_i x} = -x \ls \frac{\dif}{\dif
x} g(x) \rs (g(x))^{-1}
\end{multline*}

\end{proof}
\begin{lemma}
Let $K$~be a field of characteristic 0, let $F(x) \in K[[x]]$, let
$m, k$~be positive integers. Then
\begin{equation}
\frac{m}{k} \Coef_{x^m}(F(x))^k = \Coef_{x^{m-1}}((F(x))^{k-1}
\frac {\dif}{\dif x} F(x)).
\end{equation}
If $F(0) = 0$, then
\begin{equation}
\sum_{k=1}^m \frac{m}{k} \Coef_{x^m}(F(x))^k = \Coef_{x^m}-
\frac{x \frac{\dif}{\dif x} (1 -  F(x))}{1-F(x)}.
\end{equation}
\end{lemma}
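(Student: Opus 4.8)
The plan is to establish (6) first by a direct differentiation argument, and then to obtain (7) from (6) by summing a geometric series in the ring of formal power series.

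For (6), I would compute the formal derivative of $(F(x))^k$ in two ways. Writing $(F(x))^k = \sum_{j \geqslant 0} c_j x^j$ with $c_j = \Coef_{x^j}(F(x))^k$, termwise differentiation gives $\frac{\dif}{\dif x}(F(x))^k = \sum_{j \geqslant 1} j c_j x^{j-1}$, so the coefficient of $x^{m-1}$ is $m c_m = m\,\Coef_{x^m}(F(x))^k$. On the other hand, the formal derivative on $K[[x]]$ is $K$-linear and obeys the Leibniz rule, so $\frac{\dif}{\dif x}(F(x))^k = k (F(x))^{k-1}\frac{\dif}{\dif x}F(x)$, whose coefficient of $x^{m-1}$ is $k\,\Coef_{x^{m-1}}((F(x))^{k-1}\frac{\dif}{\dif x}F(x))$. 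Equating the two expressions and dividing by $k$ (legitimate since $\charr K = 0$, which is also what gives meaning to the rational factor $m/k$) yields (6).

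For (7), I would sum (6) over $k = 1, \ldots, m$ and pull the common factor $\frac{\dif}{\dif x}F(x)$ out of the sum:
\[
\sum_{k=1}^m \frac{m}{k}\Coef_{x^m}(F(x))^k = \Coef_{x^{m-1}}\ls \frac{\dif}{\dif x}F(x) \sum_{k=1}^m (F(x))^{k-1} \rs = \Coef_{x^{m-1}}\ls \frac{\dif}{\dif x}F(x) \sum_{j=0}^{m-1} (F(x))^{j} \rs .
\]
The key step, and the only place where the hypothesis $F(0)=0$ is used, is to replace this finite partial sum by the full geometric series $\sum_{j=0}^{\infty}(F(x))^j = (1-F(x))^{-1}$ without altering the coefficient of $x^{m-1}$. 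Since $F(0)=0$, the series $F(x)$ has order at least $1$, hence $(F(x))^j$ has order at least $j$; therefore every term with $j \geqslant m$ contributes nothing to the coefficient of $x^{m-1}$, and the same order condition is exactly what makes $(1-F(x))^{-1}$ a well-defined element of $K[[x]]$. Thus the right-hand side equals $\Coef_{x^{m-1}}\left(\frac{\dif}{\dif x}F(x)\cdot(1-F(x))^{-1}\right)$.

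Finally, I would match this with the claimed right-hand side of (7). Because $\frac{\dif}{\dif x}(1-F(x)) = -\frac{\dif}{\dif x}F(x)$, we have
\[
-\frac{x \frac{\dif}{\dif x}(1-F(x))}{1-F(x)} = \frac{x \frac{\dif}{\dif x}F(x)}{1-F(x)} ,
\]
and since extracting the coefficient of $x^m$ from $x$ times a series equals extracting the coefficient of $x^{m-1}$ from that series, $\Coef_{x^m}\left(\frac{x \frac{\dif}{\dif x}F(x)}{1-F(x)}\right) = \Coef_{x^{m-1}}\left(\frac{\frac{\dif}{\dif x}F(x)}{1-F(x)}\right)$, which coincides with the expression obtained above. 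This proves (7). I do not anticipate a genuine obstacle; the one point demanding care is the passage from the finite partial sum to the infinite geometric series, which is precisely where $F(0)=0$ is invoked.
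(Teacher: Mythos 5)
Your proposal is correct and follows essentially the same route as the paper: identity (6) via differentiating $(F(x))^k$ and comparing coefficients (the chain/Leibniz rule combined with the shift $\Coef_{x^m}G = \frac{1}{m}\Coef_{x^{m-1}}\frac{\dif}{\dif x}G$), and (7) by summing over $k$, extending the finite geometric sum to $\sum_{j=0}^{\infty}(F(x))^j = (1-F(x))^{-1}$ using $F(0)=0$, and shifting the coefficient index with the factor $x$. If anything, you are slightly more explicit than the paper in justifying the passage from the finite sum to the infinite series via the order argument, which the paper performs without comment.
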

\begin{proof}
If $F(x)  = \sum\limits_{n=0}^{\infty} a_n x^n$, then by definition
$\Coef\limits_{x^m} F(x) = a_m$. Therefore $\Coef\limits_{x^m} F(x)
= \frac{1}{m} \times \break \times \Coef\limits_{x^{m-1}}\frac{\dif
}{\dif x} F(x) $ and it follows that
$$
\Coef\limits_{x^m}(F(x))^k \!=\! \frac{1}{m} \Coef\limits_{x^{m-1}}
\frac{\dif }{\dif x} (F(x))^k\! = \!\frac{1}{m}
\Coef\limits_{x^{m-1}} k (F(x))^{k-1} \frac{\dif}{\dif x } F(x) \!=
\frac{k}{m} \Coef\limits_{x^{m-1}}((F(x))^{k-1} \frac{\dif}{\dif x}
F(x)),
$$
i.~d. $\frac{m}{k} \Coef\limits_{x^m} (F(x))^k =
\Coef\limits_{x^{m-1}} ((F(x))^{k-1} \frac{\dif}{\dif x} F(x))$. Let
$F(0) =0$. Then by the equality (6) it follows that
\begin{multline*}
\sum_{k=1}^m \frac{m}{k} \Coef_{x^m} (F(x))^k = \sum_{k=1}^m
\Coef_{x^{m-1}} \Bigl( (F(x))^{k-1} \frac{\dif}{\dif x} F(x) \Bigr) =
\Coef_{x^{m-1}} \Bigl(\frac{\dif}{\dif x} F(x)\Bigr) \sum_{k=1}^{m}
(F(x))^{k-1} = \\ =\Coef_{x^{m-1}} \Bigl(\frac{\dif}{\dif x} F(x) \Bigr)
\sum_{k=1}^{\infty} (F(x))^{k-1} =
\Coef_{x^{m-1}}\Bigl( \frac{\dif}{\dif x} F(x) \Bigr)(1 - F(x))^{-1} = \\ =
\Coef_{x^m} x \Bigl( \frac{\dif}{\dif x} F(x) \Bigr) (1-F(x))^{-1} =
\Coef_{x^m}\Bigl[ -x \Bigl(\frac{\dif}{\dif x} (1-F(x))\Bigr) (1 -F(x))^{-1} \Bigr]
\end{multline*}
Lemma 2 is proved.
\end{proof}

If $\charr K = 0,$ $\varphi(x) \in K[[x]]$, $\varphi(0)=1$, then by definition
$\log(\varphi (x)) = - \sum\limits_{m=1}^{\infty}
\frac{(1-\varphi(x))^m}{m}$. Therefore
$$
\frac{\dif}{\dif x} \log(\varphi(x)) = -\biggl(\sum\limits_{m=1}^{\infty}
(1 - \varphi(x))^{m-1}\biggr) \frac{\dif}{\dif x} (1 -\varphi(x)) =
(\varphi(x))^{-1}\frac{\dif}{\dif x} \varphi(x).
$$
Therefore the equatlity (7) follows from the equality
$$
m \Coef_{x^m} \log
(1-F(x)) = \Coef_{x^m}\Bigl(x \frac{\dif}{\dif x} \log(1-F(x))\Bigr)
$$

\begin{lemma}
Let $K$~be a field of characteristic 0, let $\varphi_1 (x),\varphi_2
(x), \ldots, \varphi_n (x) \!\in \!K[[x]],$ let $\varphi_i (0)=1,$
$1 \leqslant i \leqslant n.$ Then
\begin{equation}
\sum_{i=1}^n \log (\varphi_i (x))=\log \ls \prod_{i=1}^n \varphi_i (x) \rs
\end{equation}
\end{lemma}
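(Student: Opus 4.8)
The plan is to use the logarithmic derivative identity established immediately before the statement, namely $\frac{\dif}{\dif x}\log(\varphi(x)) = (\varphi(x))^{-1}\frac{\dif}{\dif x}\varphi(x)$, together with the elementary principle that in characteristic~$0$ a formal power series whose derivative vanishes must be constant. First I would check that both sides of~(8) are well defined. Since each $\varphi_i(0)=1$, the product $\prod_{i=1}^n \varphi_i(x)$ again has constant term~$1$, so $\log \ls \prod_{i=1}^n \varphi_i(x) \rs$ is given by the same defining series $-\sum_{m=1}^{\infty}\frac{(1-\varphi(x))^m}{m}$ as each individual $\log(\varphi_i(x))$. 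Moreover, setting $x=0$ kills every factor $(1-\varphi_i(x))^m$ and $(1-\prod_j\varphi_j(x))^m$, so both sides of~(8) have vanishing constant term.

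Next I would introduce the difference $L(x) = \sum_{i=1}^n \log(\varphi_i(x)) - \log \ls \prod_{i=1}^n \varphi_i(x)\rs$ and differentiate it term by term using the logarithmic derivative formula:
\[
\frac{\dif}{\dif x} L(x) = \sum_{i=1}^n \frac{\frac{\dif}{\dif x}\varphi_i(x)}{\varphi_i(x)} - \frac{\frac{\dif}{\dif x}\prod_{j=1}^n\varphi_j(x)}{\prod_{j=1}^n\varphi_j(x)}.
\]
Now the Leibniz product rule $\frac{\dif}{\dif x}\prod_{j=1}^n\varphi_j = \sum_{i=1}^n \ls \frac{\dif}{\dif x}\varphi_i \rs \prod_{j\neq i}\varphi_j$, after division by $\prod_{j=1}^n\varphi_j$, shows that the second term equals $\sum_{i=1}^n \frac{\frac{\dif}{\dif x}\varphi_i(x)}{\varphi_i(x)}$, exactly cancelling the first. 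Hence $\frac{\dif}{\dif x}L(x)=0$.

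Finally I would invoke the characteristic-zero hypothesis to conclude. Writing $L(x)=\sum_{k\geqslant 0} c_k x^k$, the condition $\frac{\dif}{\dif x}L(x)=0$ gives $k c_k = 0$ for every $k\geqslant 1$, and since $\charr K = 0$ this forces $c_k = 0$ for all $k\geqslant 1$; combined with $c_0 = L(0) = 0$ from the first step, this yields $L(x)\equiv 0$, which is precisely the identity~(8). The only place the hypotheses genuinely bite is this last step: characteristic~$0$ is what turns ``vanishing derivative and vanishing constant term'' into ``identically zero,'' so I expect the main care to lie not in any deep obstacle but in stating this constancy principle and the product-rule bookkeeping cleanly, rather than in attempting to expand the nested series $-\sum_m (1-\prod_i\varphi_i)^m/m$ directly, which would be far messier.
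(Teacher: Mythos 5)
Your proposal is correct and follows essentially the same route as the paper: both compute the formal derivative of each side via the logarithmic-derivative identity and the Leibniz rule, observe the derivatives agree, note both sides have zero constant term, and conclude equality using the characteristic-zero fact that a series is determined by its derivative and constant term. Your only cosmetic difference is packaging the argument as ``the difference $L(x)$ has zero derivative and zero constant term,'' and you make explicit the step $k c_k = 0 \Rightarrow c_k = 0$ that the paper leaves implicit.
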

\begin{proof}
Since
\begin{multline*}
\frac{\dif}{\dif x} \sumn \log(\varphi_i (x)) = \sumn \frac{\dif}{\dif x}
\log(\varphi_i (x))= \sumn (\varphi_i (x))^{-1} \frac{\dif}{\dif x} \varphi_i (x),\\
\frac{\dif}{\dif x} \log \ls \prod_{i=1}^n \varphi_i (x) \rs= \ls
\prod_{i=1}^n \varphi_i (x)\rs^{-1}
\frac{\dif}{\dif x} \ls \prod_{i=1}^n \varphi_i (x) \rs =\\
=\biggl(\prod_{i=1}^n \varphi_i (x)\biggr)^{-1} \sum_{j=1}^n \ls \prod_{i=1}^n \varphi_i (x) \rs
(\varphi_j (x))^{-1} \frac{\dif}{\dif x} \varphi_j (x)= \sum_{j=1}^n(\varphi_j (x))^{-1}
\frac{\dif}{\dif x} \varphi_j (x)
\end{multline*}
and constant terms of the series $\sumn \log (\varphi_i (x))$ and
$\log\ls \prod\limits_{i=1}^n \varphi_i (x) \rs$ are equal to $0$,
then
$$
 \sumn \log(\varphi_i (x)) = \log\ls \prod_{i=1}^n \varphi_i (x)\rs.
$$
\end{proof}

\begin{lemma}
Let $n \geqslant 1$, let $l_1,l_2, \ldots, l_n$~be  positive
integers such that $l_1 \geqslant l_2 \geqslant \ldots \geqslant
l_n$. Then
\begin{equation}
\frac{l_1+l_2+ \ldots + l_n}{l_1}
\prod_{i=1}^{n-1}\binom{l_i}{l_{i+1}} = \biggl(\prod_{i=1}^{n-1}
\binom{l_i}{l_{i+1}} \biggr) + \sum_{k=1}^{n-1} \biggl(\prod_{i=1}^k
\binom{l_i-1}{l_{i+1}-1} \biggr) \biggl(\prod_{i=k+1}^{n-1}
\binom{l_i}{l_{i+1}} \biggr)
\end{equation}
\end{lemma}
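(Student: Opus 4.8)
The plan is to reduce the whole identity to a single elementary fact about binomial coefficients together with a telescoping cancellation, so that neither induction nor heavy computation is needed. The starting point is the observation that for positive integers with $l \geqslant r \geqslant 1$ one has
$$
\binom{l-1}{r-1} = \frac{r}{l}\binom{l}{r},
$$
which follows immediately from writing both sides as factorial quotients. The monotonicity hypothesis $l_1 \geqslant l_2 \geqslant \ldots \geqslant l_n$ guarantees that every $\binom{l_i}{l_{i+1}}$ is a genuine nonzero binomial coefficient with $l_i \geqslant l_{i+1} \geqslant 1$, so the identity applies to each factor and every division below is legitimate.

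First I would abbreviate $P = \prod_{i=1}^{n-1}\binom{l_i}{l_{i+1}}$ and rewrite each ``decremented'' product appearing on the right. Applying the elementary identity to each factor of $\prod_{i=1}^k \binom{l_i-1}{l_{i+1}-1}$ pulls out the scalar $\prod_{i=1}^k \frac{l_{i+1}}{l_i}$, and this scalar telescopes to $\frac{l_{k+1}}{l_1}$ because consecutive numerators and denominators cancel. Hence
$$
\prod_{i=1}^k \binom{l_i-1}{l_{i+1}-1} = \frac{l_{k+1}}{l_1}\prod_{i=1}^k \binom{l_i}{l_{i+1}},
$$
and multiplying by the untouched tail $\prod_{i=k+1}^{n-1}\binom{l_i}{l_{i+1}}$ recombines the two products into $P$, so the $k$-th summand on the right equals $\frac{l_{k+1}}{l_1}P$.

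It then remains only to sum the series. Summing over $k = 1, \ldots, n-1$ gives $\frac{P}{l_1}(l_2 + l_3 + \ldots + l_n)$, and adding the isolated leading term $P$ produces $\frac{P}{l_1}(l_1 + l_2 + \ldots + l_n)$, which is exactly the left-hand side. The argument is essentially mechanical; the only point that requires care is to verify that the hypotheses genuinely license dividing by each $l_i$ and invoking the ratio identity, both of which are secured by positivity and monotonicity. Accordingly I do not anticipate a real obstacle — the ``hard part'' is simply arranging the telescoping product cleanly so that the factored scalars collapse to $\frac{l_{k+1}}{l_1}$.
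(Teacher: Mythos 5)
Your proof is correct, and it takes a genuinely different route from the paper. The paper proves the lemma by induction on $n$: it applies the ratio identity $\binom{l_1}{l_2} = \frac{l_1}{l_2}\binom{l_1-1}{l_2-1}$ once, to the first factor only, thereby peeling off the leading term and reducing the claim for $l_1, \ldots, l_n$ to the claim for $l_2, \ldots, l_n$. You instead apply the same elementary identity to \emph{every} factor of each decremented product at once, so that
$$
\prod_{i=1}^k \binom{l_i-1}{l_{i+1}-1} = \biggl(\prod_{i=1}^k \frac{l_{i+1}}{l_i}\biggr)\prod_{i=1}^k \binom{l_i}{l_{i+1}} = \frac{l_{k+1}}{l_1}\prod_{i=1}^k \binom{l_i}{l_{i+1}},
$$
and each summand on the right-hand side collapses to $\frac{l_{k+1}}{l_1}P$ with $P = \prod_{i=1}^{n-1}\binom{l_i}{l_{i+1}}$; summing then finishes in one line. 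In effect your telescoping is what the paper's induction unrolls to, but your version is shorter, avoids induction entirely, and makes the mechanism transparent (each decremented product is just a scalar multiple of $P$). It also exposes that the monotonicity hypothesis is only needed to keep the binomial coefficients nonzero and meaningful, not for the algebra itself; the paper's inductive formulation obscures this. The one cosmetic gap in your write-up is that you state the ratio identity for $l \geqslant r \geqslant 1$ and should note explicitly that the hypotheses give exactly $l_i \geqslant l_{i+1} \geqslant 1$ for every consecutive pair, but you do flag this and it is immediate.
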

{\it Proof} by induction on $n$. Let $n \geqslant 2$. Then from the
equality $\binom{l_1}{l_2} = \frac{l_1}{l_2} \binom{l_1 -1}{l_2-1}$
it follows that $\frac{l_1+l_2+\ldots+l_n}{l_1}
\prod\limits_{i=1}^{n-1} \binom{l_i}{l_{i+1}} =
(1+\frac{l_2+\ldots+l_n}{l_1}) \prod\limits_{i=1}^{n-1}
\binom{l_i}{l_{i+1}} = \Bigl(\prod\limits_{i=1}^{n-1}
\binom{l_i}{l_{i+1}} \Bigr) + \frac{l_2+ \ldots +
l_n}{l_1}\frac{l_1}{l_2} \binom{l_1 -1}{l_2 -1}
\prod\limits_{i=2}^{n-1}\binom{l_i}{l_{i+1}} = \break
=\Bigl(\prod\limits_{i=1}^{n-1} \binom{l_i}{l_{i+1}}\Bigr) +
\binom{l_1 -1}{l_2 -1}\frac{l_2 + \ldots + l_n}{l_2}
\prod\limits_{i=2}^{n-1} \binom{l_i}{l_{i+1}}$. By the induction
hypothesis it follows that $\frac{l_2+\ldots + l_n}{l_2} \times
\break \times \prod\limits_{i=2}^{n-1} \binom{l_i}{l_{i+1}} =
\Bigl(\prod\limits_{i=2}^{n-1} \binom{l_i}{l_{i+1}} \Bigr) +
\sum\limits_{k=2}^{n-1}\Bigl( \prod\limits_{i=2}^k \binom{l_i
-1}{l_{i+1}-1}\Bigr)\Bigl(\prod\limits_{i=k+1}^{n-1}
\binom{l_i}{l_{i+1}}\Bigr).$ Therefore
$\frac{l_1+l_2+ \ldots +l_k}{l_1} \prod\limits_{i=1}^{n-1}
\binom{l_i}{l_{i+1}} = \break = \! \Bigl(\prod\limits_{i=1}^{n-1} \!
\binom{l_i}{l_{i+1}}\Bigr)+ \binom{l_1 -1}{l_2 -1}
\Bigl[\prod\limits_{i=2}^{n-1} \! \binom{l_i}{l_{i+1}} +
\sum\limits_{k=2}^{n-1} \Bigl(\prod\limits_{i=2}^k \!
\binom{l_i-1}{l_{i+1} -1} \Bigr)\Bigl(\prod\limits_{i=k+1}^{n-1} \!
\binom{l_i}{l_{i+1}}\Bigr) \Bigr]\! = \!
\Bigl(\prod\limits_{i=1}^{n-1}\binom{l_i}{l_{i+1}} \Bigr) +
\sum\limits_{k=1}^{n-1} \Bigl(\prod\limits_{i=1}^k \binom{l_i
-1}{l_{i+1}-1}\Bigr) \times \break \times
\Bigl(\prod\limits_{i=k+1}^{n-1} \binom{l_i}{l_{i+1}}\Bigr). $
Lemma 4 is proved.
\begin{lemma}
Let $\lam_1,\lam_2, \ldots, \lam_n$~be nonnegative integers,
$\sum\limits_{i=1}^n i \lam_i = \!m > \!1$. Then $\frac{m((\lam_1 +
\lam_2 + \ldots + \lam_n-1)!)}{\lam_1!\lam_2! \ldots \lam_n!}$ is
integer.
\end{lemma}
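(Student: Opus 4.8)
The plan is to reduce the claim to the integrality of ordinary multinomial coefficients. Write $L=\lam_1+\lam_2+\cdots+\lam_n$ for the total, so that the quantity in question is $\dfrac{m\,(L-1)!}{\lam_1!\lam_2!\cdots\lam_n!}$, and recall that by hypothesis $m=\sum_{i=1}^n i\lam_i$. Since $m>1$ forces at least one $\lam_i\geqslant 1$, we have $L\geqslant 1$, so $(L-1)!$ is well defined.

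First I would substitute $m=\sum_{i=1}^n i\lam_i$ into the numerator and split the expression termwise:
$$
\frac{m\,(L-1)!}{\lam_1!\lam_2!\cdots\lam_n!}=\sum_{i=1}^n i\cdot\frac{\lam_i\,(L-1)!}{\lam_1!\lam_2!\cdots\lam_n!}.
$$
The key step is to recognize each summand as an integer. For any index $i$ with $\lam_i=0$ the corresponding summand vanishes, while for $\lam_i\geqslant 1$ the factor $\lam_i$ cancels against one factor of $\lam_i!$, giving
$$
\frac{\lam_i\,(L-1)!}{\lam_1!\cdots\lam_i!\cdots\lam_n!}=\frac{(L-1)!}{\lam_1!\cdots(\lam_i-1)!\cdots\lam_n!}=\binom{L-1}{\lam_1,\ldots,\lam_i-1,\ldots,\lam_n},
$$
a multinomial coefficient, since the parts $\lam_1,\ldots,\lam_i-1,\ldots,\lam_n$ are nonnegative integers summing to $L-1$. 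Being a multinomial coefficient, it is a nonnegative integer.

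Finally, the whole expression equals $\sum_{i=1}^n i\binom{L-1}{\lam_1,\ldots,\lam_i-1,\ldots,\lam_n}$, a finite sum of nonnegative integers, hence an integer, as required.

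As for difficulty, there is essentially no hard step here: the only thing to get right is the bookkeeping of the factorials when the factor $\lam_i$ is absorbed, together with the standard fact that a multinomial coefficient is always integral. I would also remark that the hypothesis $m>1$ is used only to guarantee $L\geqslant 1$ (equivalently, that the parts $\lam_i$ are not all zero); the displayed identity holds verbatim whenever $m\geqslant 1$.
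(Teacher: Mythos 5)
Your proof is correct, and it takes a genuinely different route from the paper's. You substitute $m=\sum_{i=1}^n i\lambda_i$ into the numerator and split termwise, recognizing each nonzero summand as $i\binom{L-1}{\lambda_1,\ldots,\lambda_i-1,\ldots,\lambda_n}$, so integrality follows from nothing more than the integrality of multinomial coefficients. The paper instead forms the tail sums $l_i=\sum_{j=i}^n\lambda_j$, observes that $\sum_i l_i=m$, $l_1=\lambda_1+\cdots+\lambda_n$ and $\prod_{i=1}^{n-1}\binom{l_i}{l_{i+1}}=\frac{l_1!}{\lambda_1!\cdots\lambda_n!}$, so that the quantity in question equals $\frac{l_1+\cdots+l_n}{l_1}\prod_{i=1}^{n-1}\binom{l_i}{l_{i+1}}$, and then invokes the inductive identity of Lemma~4, which rewrites this as an explicit sum of products of binomial coefficients; the paper also sketches a second proof via Lemma~2, where the quantity appears as a coefficient of $\frac{m}{k}(F(x))^k$, which formula (6) shows lies in $\mathbb{Z}[a_1,\ldots,a_n]$. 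Your argument is shorter and entirely self-contained, avoiding both the auxiliary Lemma~4 and the generating-function machinery; what the paper's routes buy is, respectively, an explicit positive expansion in binomial products (Lemma~4 being of some independent interest) and an integration of this integrality fact into the coefficient-extraction formalism used throughout the rest of the paper. Your closing remark is also accurate: the hypothesis $m>1$ is needed only to ensure the parts are not all zero, and the argument works verbatim for $m\geqslant 1$.
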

\begin{proof}
Lemma 5 follows directly from the lemma 4. Really, let $s\! =
\max\{i \mid 1 \! \leqslant i \!\leqslant n, \, \sum\limits_{j=i}^n
\lam_j \!\ge 1\}$, $l_i = \sum\limits_{j=i}^n \lam_j$, $1 \leqslant
i \leqslant n$. Then $s \geqslant 1$, $l_i - l_{i+1} = \lam_i$, $1
\leqslant i \leqslant n-1$, $l_n = \lam_n$. Since
$\prod\limits_{i=1}^{n-1}\binom{l_i}{l_{i+1}} = \break =
\frac{l_1!}{(l_1 - l_2)!(l_2 - l_3)! \ldots (l_{n-1} - l_n)! l_n!},$
\, $\sum\limits_{i=1}^{n} l_i =
\sum\limits_{i=1}^{n}\sum\limits_{j=i}^{n} \lam_j =
\sum\limits_{j=1}^{n}\sum\limits_{i=1}^j \lam_j =
\sum\limits_{j=1}^{n}j \lam_j = m,$ then $\frac{l_1+ l_2 +\ldots +
l_s}{l_1} \times \break \times \prod\limits_{i=1}^{s-1}
\binom{l_i}{l_{i+1}} = \frac{l_1+ l_2 + \ldots + l_n}{l_1}
\prod\limits_{i=1}^{n-1} \binom{l_i}{l_{i+1}} = \frac{m}{\lam_1 +
\lam_2 + \ldots + \lam_n} \cdot \frac{(\lam_1 + \lam_2 + \ldots
+\lam_n)!}{\lam_1! \lam_2! \ldots \lam_n!} = \frac{m((\lam_1 +
\lam_2 + \ldots +\lam_n-1)!)}{\lam_1! \lam_2! \ldots \lam_n!}$
\end{proof}
Lemma 5 also follows directly from the lemma 2. Let
$k\!=\sum\limits_{i=1}^n \lam_i$, let $a_1, a_2, \ldots, a_n$~be
independent variables over the field
 $\mathbb{Q}$ of rational numbers, let $K = \mathbb{Q} (a_1, a_2,
\ldots, a_n)$ and $F(x) = \sum\limits_{i=1}^n a_i x^i$, let
$\mathbb{Z}$~be the ring of rational integers. Then
\begin{multline}
\frac{m}{k}\Coef_{x^m}(F(x))^k = \frac{m}{k}\Coef_{x^m}(a_1 x + a_2
x^2 + \ldots + a_n x^n)^k = \\ = \frac{m}{k} \! \sum
\limits_{\substack{\mu_1 + \ldots + \mu_n =k \\ \mu_i \geqslant 0,
\, \mu_i \in \mathbb{Z}, \, 1 \leqslant i \leqslant n \\ \mu_1 + 2
\mu_2 + \ldots + n \mu_n =m}} \!\! \frac{k!}{\mu_1! \mu_2! \ldots
\mu_n!} \prod_{i=1}^n a_i^{\mu_i} =
\\ = \sum \limits_{\substack{\mu_1 + \ldots + \mu_n =k \\ \mu_i
\geqslant 0, \, \mu_i \in \mathbb{Z}, \, 1 \leqslant i \leqslant n \\
\mu_1 + 2 \mu_2 + \ldots + n \mu_n =m}} \frac{m((\mu_1 + \mu_2 +
\ldots + \mu_n -1)!)}{\mu_1! \mu_2! \ldots \mu_n!} \prod_{i=1}^n
a_i^{\mu_i}.
\end{multline}

But from equality (6) it follows that
$$
\frac{m}{k}\Coef\limits_{x^m}(F(x))^k =
\Coef\limits_{x^{m-1}}((F(x))^{k-1} \frac{\dif}{\dif x} F(x))  \in
\mathbb{Z}[a_1, a_2, \ldots, a_n].
$$
Therefore
$$
\sum \limits_{\substack{\mu_1 + \ldots + \mu_n =k \\ \mu_i \geqslant
0, \, \mu_i \in \mathbb{Z}, \, 1 \leqslant i \leqslant n  \\ \mu_1 +
2 \mu_2 + \ldots + n \mu_n =m}} \frac{m((\mu_1 + \mu_2 + \ldots +
\mu_n -1)!)}{\mu_1! \mu_2! \ldots \mu_n!} \prod_{i=1}^n a_i^{\mu_i}
\in \mathbb{Z}[a_1, a_2, \ldots, a_n]
$$
Hence, in particular, $ \frac{m((\mu_1 + \mu_2 + \ldots + \mu_n
-1)!)}{\mu_1! \mu_2! \ldots \mu_n!} \in \mathbb{Z}$. Another one
more proof follows from [10,~the\-o\-rem~8,~p.18].

\begin{lemma}
Let $A$ and $B$ be commutative rings, let $x_1,x_2,\ldots,x_n$~be
independent variables over the ring $A$, let $y_1, y_2, \ldots, y_n
\in B$, let $\varphi$~be a homomorphism of the ring $A$ into the
ring $B$. Let
 $\psi: A[x_1, x_2, \ldots, x_n] \to B$ be defined by
$$
\psi\biggl(\sum a_{\lam_1,\lam_2, \ldots, \lam_n } x_1^{\lam_1}
x_2^{\lam_2} \ldots x_n^{\lam_n} \biggr) = \biggl(\sum \varphi(a_{\lam_1,\lam_2,
\ldots, \lam_n }) y_1^{\lam_1} y_2^{\lam_2} \ldots y_n^{\lam_n} \biggr).
$$
Then the mapping $\psi$~is homomorphism of the ring $A[x_1, x_2,
\ldots, x_n]$ into the ring $B$.
\end{lemma}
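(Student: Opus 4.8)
The plan is to realize $\psi$ as a composition of two standard ring homomorphisms, each of which is routine to verify, and then invoke the fact that a composition of ring homomorphisms is again a ring homomorphism. First I would extend $\varphi$ coefficient-wise to a map $\varphi_{\ast}\colon A[x_1,\ldots,x_n]\to B[x_1,\ldots,x_n]$ sending $\sum a_{\lam_1,\ldots,\lam_n}\,x_1^{\lam_1}\cdots x_n^{\lam_n}$ to $\sum \varphi(a_{\lam_1,\ldots,\lam_n})\,x_1^{\lam_1}\cdots x_n^{\lam_n}$, and then compose with the evaluation map $\mathrm{ev}\colon B[x_1,\ldots,x_n]\to B$ determined by $x_i\mapsto y_i$. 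By construction one has $\psi=\mathrm{ev}\circ\varphi_{\ast}$, so it suffices to show that each factor preserves addition, multiplication, and the unit element.

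For $\varphi_{\ast}$, additivity and the identity $\varphi_{\ast}(1)=\varphi(1)=1$ are immediate from the corresponding properties of $\varphi$. Multiplicativity reduces to comparing, for $f=\sum a_{\alpha}x^{\alpha}$ and $g=\sum b_{\beta}x^{\beta}$, the coefficient of a fixed monomial $x^{\gamma}$ on both sides: on $\varphi_{\ast}(fg)$ it equals $\varphi\bigl(\sum_{\alpha+\beta=\gamma}a_{\alpha}b_{\beta}\bigr)$, while on $\varphi_{\ast}(f)\,\varphi_{\ast}(g)$ it equals $\sum_{\alpha+\beta=\gamma}\varphi(a_{\alpha})\varphi(b_{\beta})$, and these agree precisely because $\varphi$ preserves sums and products. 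For $\mathrm{ev}$, additivity and $\mathrm{ev}(1)=1$ are clear, and multiplicativity is the familiar substitution identity $\mathrm{ev}(fg)=\mathrm{ev}(f)\,\mathrm{ev}(g)$.

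The one place where care is genuinely needed—and what I would regard as the heart of the matter—is the multiplicativity of $\mathrm{ev}$: verifying it on a pair of monomials requires rearranging a product $y_1^{\lam_1}\cdots y_n^{\lam_n}\,y_1^{\mu_1}\cdots y_n^{\mu_n}$ into $y_1^{\lam_1+\mu_1}\cdots y_n^{\lam_n+\mu_n}$, which uses the commutativity of $B$ in an essential way; the general case then follows by the distributive law together with the additivity already established. Once both factors are known to be homomorphisms, the composition $\psi=\mathrm{ev}\circ\varphi_{\ast}$ is a homomorphism, which is the assertion of the lemma. I would also note, as the conceptual reason the statement holds, that it is exactly the universal property of $A[x_1,\ldots,x_n]$ as the free commutative $A$-algebra on the generators $x_1,\ldots,x_n$: for any ring homomorphism $\varphi$ and any choice of images $y_i\in B$ there is a unique such $\psi$, the explicit composition above being the concrete witness.
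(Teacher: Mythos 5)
Your proof is correct, but it takes a genuinely different route from the paper's. The paper argues by induction on the number of variables: it first verifies the case $n=1$ by a direct computation in which the single map simultaneously applies $\varphi$ to the coefficients and substitutes $y$ for $x$, and then, for $n\geqslant 2$, writes $A[x_1,\ldots,x_n]=(A[x_1,\ldots,x_{n-1}])[x_n]$, invokes the induction hypothesis to conclude that the restriction $\psi_1$ of $\psi$ to $A[x_1,\ldots,x_{n-1}]$ is a homomorphism, and then reapplies the one-variable case with $A[x_1,\ldots,x_{n-1}]$ playing the role of the coefficient ring and $\psi_1$ playing the role of $\varphi$. You instead factor $\psi=\mathrm{ev}\circ\varphi_{\ast}$ through the intermediate ring $B[x_1,\ldots,x_n]$, verify directly in multi-index notation that each factor is a homomorphism, and compose; no induction is needed. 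The paper's route buys economy of computation: the Cauchy-product (convolution of coefficients) manipulation is done exactly once, for a single indeterminate, at the cost of the slightly delicate observation that the inductive step is literally the $n=1$ case over a larger coefficient ring. Your route buys conceptual clarity: it separates the two logically distinct operations (transport of coefficients and evaluation), makes explicit that commutativity of $B$ is precisely what makes evaluation multiplicative, and identifies the statement as an instance of the universal property of $A[x_1,\ldots,x_n]$ as the free commutative $A$-algebra; the cost is that multiplicativity of the multivariable evaluation map must be checked with multi-index bookkeeping (or would itself be proved by an induction much like the paper's). Both arguments are complete and correct.
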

{\it Proof} by induction on $n$. Let $f(x), g(x)\! \in \! A[x],$
$f(x)\! = \! \sum\limits_{i=0}^k a_i x^i,$ $g(x) \!=
\!\sum\limits_{j=0}^l b_j x^j,$ $y \in B$, $\psi(f(x)) = \break
=\sum\limits_{i=0}^k \varphi(a_i) y^i.$ Then $f(x) + g(x) =
\sum\limits_{i=0}^{\max(k,l)} (a_i + b_i) x^i$, $f(x) g(x) =
\sum\limits_{s=0}^{k+l} c_s x^s,$ $c_s = \sum\limits_{i+j=s, \, i,j
\geqslant 0} a_i b_j$, $\psi(f(x) + g(x)) =
\psi\Bigl(\sum\limits_{i=0}^{\max(k,l)}(a_i + b_i) x^i \Bigr) =
\sum\limits_{i=0}^{\max(k,l)}(\varphi(a_i + b_i)) y^i =
\sum\limits_{i=0}^{\max(k,l)}(\varphi(a_i) + \varphi(b_i)) y^i =
\break = \sum\limits_{i=0}^{k}\varphi(a_i)y^i +
\sum\limits_{i=0}^{l}\varphi(b_i)y^i = \psi(f(x)) + \psi(g(x)),$
\begin{multline*}
\psi(f(x)g(x)) = \sum\limits_{s=0}^{k+l}
\varphi(c_s)y^s = \sum\limits_{s=0}^{k+l}
\biggl(\varphi  \biggl(\sum\limits_{i+j=s}a_i b_j \biggr) \biggr)y^s = \\ =
\sum\limits_{s=0}^{k+l} \biggl( \sum\limits_{i+j=s} \varphi(a_i)\varphi(b_j) \biggr)
y^s = \biggl( \sum_{i=0}^k (\varphi(a_i))y^i \biggr)
\biggl(\sum_{j=0}^l (\varphi(b_i)) y^j \biggr) = \psi (f(x)) \psi(g (x)).
\end{multline*}
 Let $n \geqslant 2,$ let $\psi_1$~be restriction
of the mapping $\psi$ on subring $A[x_1,x_2, \ldots, x_{n-1}]$ of
the ring $A[x_1,x_2, \ldots, x_n]$. By the induction hypothesis for
$n-1$ it follows that $\psi_1$~is the homomorphism of the ring
$A[x_1,\ldots,x_{n-1}]$ into the ring $B$. From the proved above
case $n=1$ it follows that $\psi$~is homomorphism of the ring
$(A[x_1,\ldots,x_{n-1}])[x_n]$ into the ring $B$. But
$(A[x_1,\ldots,x_{n-1}])[x_n] = \break = A[x_1,\ldots,x_n]$ and
therefore $\psi$~is homomorphism of the ring $A[x_1,\ldots,x_n]$
into the ring $B$. Lemma 6 is proved.
\begin{lemma}
Let $K$~be a field, $f(x) \in K[x]$, $E$~be the splitting field of
 $f(x)$ over $K$, $f(x) = x^n + \break +\sum\limits_{i=1}^n a_i x^{n-i} =
\prod\limits_{i=1}^n (x-x_i),$ $x_i \in E$, $1 \leqslant i \leqslant n$. Then for all $m \geqslant 1$
\begin{equation}
\sum_{i=1}^n x_i^m = \sum_{\substack{\lam_1 + 2\lam_2 + n \lam_n =
m, \\ \lam_i \geqslant 0,\, \lam_i \in \mathbb{Z}, \, 1 \leqslant i \leqslant n }}
(-1)^{\lam_1 + \lam_2 + \ldots + \lam_n} \frac{m((\lam_1 + \lam_2
+ \ldots + \lam_n-1)!)}{\lam_1 ! \lam_2 ! \ldots  \lam_n !}
\prod_{i=1}^n a_i^{\lam_i}
\end{equation}
\end{lemma}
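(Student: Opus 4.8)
The plan is to read off the identity~(11) by specializing the two generating-function identities already proved, Lemma~1 and the second half of Lemma~2, and then extracting the coefficient of $x^m$ through a multinomial expansion. First I would express the $g$ of Lemma~1 in terms of the coefficients $a_i$: since $f$ is monic, $g(x) = x^n f(x^{-1}) = 1 + \sum_{i=1}^n a_i x^i$. Putting $F(x) = -\sum_{i=1}^n a_i x^i$ then gives $F(0) = 0$ and $1 - F(x) = g(x)$, so the hypothesis of the identity~(7) is satisfied.

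Next I would identify the right-hand side of (7) with the power-sum series. By Lemma~1, $\sum_{k=1}^{\infty} p_k x^k = -x \ls \frac{\dif}{\dif x} g(x) \rs (g(x))^{-1} = -x \ls \frac{\dif}{\dif x}(1 - F(x)) \rs (1 - F(x))^{-1}$, and the coefficient of $x^m$ on the left is exactly $p_m = \sum_{i=1}^n x_i^m$. Hence (7) yields $p_m = \sum_{k=1}^m \frac{m}{k}\Coef_{x^m}(F(x))^k$. I would then expand each term by the multinomial formula of (10), but with every $a_i$ replaced by $-a_i$: the coefficient of $x^m$ in $(F(x))^k$ is supported on the multi-indices with $\mu_1 + \cdots + \mu_n = k$ and $\mu_1 + 2\mu_2 + \cdots + n\mu_n = m$, the product $\prod_i (-a_i)^{\mu_i}$ supplies the sign $(-1)^{\mu_1 + \cdots + \mu_n} = (-1)^k$, and $\frac{m}{k}\cdot k! = m((\mu_1 + \cdots + \mu_n - 1)!)$. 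Summing over $k = 1, \ldots, m$ merges these constrained sums into the single sum over all $\lam$ with $\lam_1 + 2\lam_2 + \cdots + n\lam_n = m$, $\lam_i \geqslant 0$ (for $m \geqslant 1$ the bound $1 \leqslant \sum \lam_i \leqslant m$ is automatic), which is precisely the right-hand side of (11).

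The point that needs care is the hypothesis on the characteristic. The identity~(7) was proved only for $\charr K = 0$, whereas Lemma~7 is asserted for an arbitrary field. I would therefore first run the computation above over the characteristic-$0$ field $K_0 = \mathbb{Q}(a_1, \ldots, a_n)$, treating $a_1, \ldots, a_n$ as independent indeterminates and taking the $x_i$ in a splitting field of $f$. In this setting both sides of (11) are polynomials in $a_1, \ldots, a_n$ with integer coefficients: the right-hand side by Lemma~5, and the left-hand side because $p_m = \sum_i x_i^m$ is symmetric in the roots and hence an integer polynomial in the elementary symmetric functions $e_i$, which here satisfy $e_i = (-1)^i a_i$. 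Thus (11) holds identically in $\mathbb{Z}[a_1, \ldots, a_n]$.

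Finally I would transfer this universal identity to the general case. Given an arbitrary field $K$ and $f(x) = x^n + \sum_{i=1}^n a_i x^{n-i} \in K[x]$, the assignment sending each indeterminate $a_i$ to its actual value in $K$ extends, by Lemma~6, to a ring homomorphism under which the established polynomial identity in the $a_i$ maps to the desired identity~(11) over $K$ (both sides being evaluated in the splitting field $E$). The only real obstacle is bookkeeping — the sign $(-1)^k$ from replacing $a_i$ by $-a_i$ and the reindexing of the double sum — together with this specialization step; the analytic heart of the argument is an immediate consequence of Lemmas~1 and~2.
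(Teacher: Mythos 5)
Your characteristic-$0$ argument coincides with the paper's: set $F(x)=-\sum_{i=1}^n a_i x^i$, observe $1-F(x)=x^n f(x^{-1})=g(x)$, combine Lemma~1 with equality~(7) of Lemma~2, and expand by the multinomial formula~(10); the sign bookkeeping $\prod_i(-a_i)^{\mu_i}$ and the identity $\frac{m}{k}\cdot k! = m((\mu_1+\cdots+\mu_n-1)!)\cdot\frac{k!}{k\,(k-1)!}$ work out exactly as in the paper's part~1). Likewise, reducing the arbitrary-characteristic case to a universal polynomial identity with integer coefficients (integrality via Lemma~5) and then specializing via Lemma~6 is precisely the paper's strategy.

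The one point where you genuinely deviate, and where your write-up has a gap, is the choice of indeterminates in the transfer step. The paper takes the \emph{roots} $x_1,\ldots,x_n$ as indeterminates over $\mathbb{Q}$, so that (11) becomes the identity (13) in $\mathbb{Z}[x_1,\ldots,x_n]$, and then applies the homomorphism $\psi\colon \mathbb{Z}[x_1,\ldots,x_n]\to E$, $x_i\mapsto y_i$; under $\psi$ the left side goes directly to $\sum_i y_i^m$ and the right side is identified via Vieta, $s_k(y_1,\ldots,y_n)=(-1)^k a_k$. You instead take the \emph{coefficients} $a_1,\ldots,a_n$ as indeterminates and specialize $a_i\mapsto\alpha_i\in K$. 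Under that homomorphism $\mathbb{Z}[a_1,\ldots,a_n]\to K$ your universal identity becomes $P_m(\alpha_1,\ldots,\alpha_n)=Q_m(\alpha_1,\ldots,\alpha_n)$ in $K$, where $P_m$ is the abstract polynomial that expresses the generic power sum through the generic coefficients, and $Q_m$ is the right side of (11). But the claim to be proved concerns $\sum_i y_i^m$, the power sum of the roots of the \emph{specialized} polynomial in $E$, and the equality $P_m(\alpha_1,\ldots,\alpha_n)=\sum_i y_i^m$ is not delivered by your homomorphism: it is exactly the assertion that the expression of power sums through coefficients is stable under specialization, which your parenthetical ``both sides being evaluated in the splitting field $E$'' assumes rather than proves. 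To close it you need the symmetric-function identity $p_m=N_m(s_1,\ldots,s_n)$ in $\mathbb{Z}[x_1,\ldots,x_n]$ together with one more application of Lemma~6 to the evaluation $x_i\mapsto y_i$ and Vieta's formulas in $E$ --- which is precisely the paper's root-specialization. So the idea is right and all the needed lemmas are on your list, but the last step should either be rewritten with the roots as the indeterminates or be supplemented by this compatibility argument.
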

\begin{proof}
We shall prove at first the equality (11) under the condition that
$\charr K = 0.$

1) Let $F(x) = -\sum\limits_{i=1}^n a_i x^i$. Then by the formula
(10) it follows that
\begin{equation*}
\sum_{k=1}^m \frac{m}{k} \Coef_{x^m}(F(x))^k =
\sum_{\substack{\lam_1 + 2\lam_2 + n \lam_n = m, \\ \lam_i \ge
0,\, \lam_i \in \mathbb{Z}, \, 1 \leqslant i \leqslant n }} (-1)^{\lam_1 +
\lam_2 + \ldots + \lam_n} \frac{m((\lam_1 + \lam_2 + \ldots +
\lam_n-1)!)}{\lam_1 ! \lam_2 ! \ldots  \lam_n !} \prod_{i=1}^n
a_i^{\lam_i}
\end{equation*}
Since $1-F(x) = 1 + \sum\limits_{i=1}^n a_i x^i = x^n f(x^{-1})$,
then by the lemma 1 it follows that
$$
\Coef_{x^m} - x \biggl(\frac{\dif}{\dif x}
(1-F(x))\biggr) (1-F(x))^{-1} = \sum_{i=1}^n x_i^m.
$$
Therefore from the equality (7) it follows the equality (11).

2) Let $g(x) = x^n f(x^{-1}) = 1 + \sum\limits_{i=1}^n a_i x^i,$
$\varphi(x) = 1 - g(x) = -\sum\limits_{i=1}^n a_i x^i$. Then by the equality
(5) it follows that
$$
\sum_{m=1}^{\infty} P_m x^{m-1} = \biggl(\frac{\dif }{\dif x} \varphi(x)\biggr)(1
-\varphi(x))^{-1} = \biggl(\frac{\dif}{\dif x} \varphi(x) \biggr)
\sum\limits_{l=0}^{\infty} (\varphi(x))^l.
$$
Hence and by the formula of differentiation of formal series it follows that
\begin{multline*}
\frac{\dif}{\dif x} \sum_{m=1}^{\infty} \frac{P_m}{m} x^m =
\sum_{m=1}^{\infty} P_m x^{m-1} = \biggl(\frac{\dif}{\dif x} \varphi(x) \biggr)
\sum_{l=0}^{\infty} (\varphi(x))^l = \\
=\frac{\dif}{\dif x} \sum_{l=0}^{\infty} \frac{1}{l+1}
(\varphi(x))^{l+1} = \frac{\dif}{\dif
x}\sum_{k=1}^{\infty}\frac{1}{k}(\varphi(x))^k
\end{multline*}
and since constant terms of the series $\sum\limits_{m=1}^{\infty}
\frac{P_m}{m} x^m$ and $\sum\limits_{k=1}^{\infty} \frac{1}{k}
(\varphi(x))^k$ are equal to 0, then
\begin{equation}
\sum\limits_{m=1}^{\infty} \frac{P_m}{m} x^m =
\sum\limits_{m=1}^{\infty} \frac{1}{k} (\varphi(x))^k.
\end{equation}
From the equality (12) it follows that
\begin{multline*}
\frac{P_m}{m} = \sum_{k=1}^m \frac{1}{k} \Coef_{x^m} (\varphi(x))^k =
\sum_{k=1}^m \frac{1}{k} \sum_{\substack{\lam_1+\lam_2+ \ldots +
\lam_n = k\\ \lam_1 + 2\lam_2 + n \lam_n = m, \\ \lam_i \in
\mathbb{Z}, \, \lam_i \geqslant 0,\, 1 \leqslant i \leqslant n }} \frac{k!}{\lam_1!
\lam_2! \ldots \lam_n!} \prod_{i=1}^n (-a_i)^{\lam_i} = \\ =
\sum_{\substack{\lam_1 + 2\lam_2 + n \lam_n = m, \\ \lam_i \in
\mathbb{Z}, \, \lam_i \geqslant 0,\, 1 \leqslant i \leqslant n }}
(-1)^{\lam_1+\lam_2+ \ldots + \lam_n} \frac{(\lam_1+\lam_2+ \ldots
+ \lam_n-1)!}{\lam_1! \lam_2! \ldots \lam_n!} \prod_{i=1}^n
(a_i)^{\lam_i}
\end{multline*}
and therefore
$$
P_m=\sum_{\substack{\lam_1 + 2\lam_2 + n \lam_n = m, \\ \lam_i \in
\mathbb{Z}, \, \lam_i \geqslant 0,\, 1 \leqslant i \leqslant n }}
(-1)^{\lam_1+\lam_2+ \ldots +\lam_n} \frac{m(\lam_1+\lam_2+ \ldots
+ \lam_n-1)!}{\lam_1! \lam_2! \ldots \lam_n!} \prod_{i=1}^n
(a_i)^{\lam_i}
$$

3) The equality (12) follows directly from the equality (8). Really,
$-\sum\limits_{k=1}^{\infty} \frac{1}{k} (\varphi(x))^k = \break = \log(1 - \varphi(x))
= \log(g(x)) = \log\Bigl(1 + \sum\limits_{i=1}^n a_i x^i\Bigr) =
\log\Bigl(\prod\limits_{i=1}^n(1-x_i x)\Bigr) =
\sum\limits_{i=1}^n \log(1-x_i x) = \break= -
\sum\limits_{i=1}^n \sum\limits_{m=1}^{\infty} \frac{(x_i x)^m}{m} = -
\sum\limits_{m=1}^{\infty} \sum\limits_{i=1}^n \frac{(x_i x)^m}{m} =
-\sum\limits_{m=1}^{\infty} \frac{P_m}{m} x^m$.

Now let $x_1, x_2, \ldots, x_n$~be independent variables over the
field of rational numbers $\mathbb{Q}$ and $K =\break =
\mathbb{Q}(x_1,x_2,\ldots,x_n)$, let $f(x) = \prod\limits_{i=1}^n (x
- x_i) = x^n + \sum\limits_{i=1}^n a_i x^{n-i}$. Since
$$
a_k =(-1)^k \sum\limits_{1 \leqslant i_1 < i_2 < \ldots < i_k \leqslant n} x_{i_1}x_{i_2}
\ldots x_{i_k} = (-1)^k s_k(x_1,x_2, \ldots, x_n), \; 1 \leqslant k \leqslant n,
$$
then by the lemma 5 it follows that the equality (11) is the polynomial
identity in the ring $\mathbb{Z}[x_1,x_2,\ldots, x_n]$. Lemma 5 also follows
from the equality (11) and from classical theorem about unique representation
of a symmetric polynomials from the ring $\mathbb{Z}[x_1,x_2,\ldots,x_n]$ in
the form of a polynomials $g(s_1,s_2,\ldots,s_n)$, where $g(x_1,x_2,\ldots,x_n)
\in \mathbb{Z}[x_1,x_2,\ldots,x_n]$. Now let $K$~be a field of positive
characteristic and $f(x) \in K[x],$ let $E$~be the splitting field of $f(x)$
over the field $K$, let $ f(x) = x^n + \sum\limits_{i=1}^n a_i x^{n-i} =
\prod\limits_{i=1}^n (x-y_i)$, $y_i \in E$, $1 \leqslant i \le n$. Let $e$~be
the unity of the field $K$, let $\varphi  \colon \mathbb{Z} \to E,$ $\varphi
(k) = ke$, $k \in \mathbb{Z}$. Let $\psi \colon \mathbb{Z}[x_1,x_2,\ldots,x_n]
\to E$,
$$
\psi\biggl(\sum
b_{\lam_1,\lam_2, \ldots, \lam_n} x_1^{\lam_1} x_2^{\lam_2} \ldots
x_k^{\lam_k} \biggr) = \sum \varphi  (b_{\lam_1,\lam_2, \ldots, \lam_n})
y_1^{\lam_1} y_2^{\lam_2} \ldots y_k^{\lam_k} ),
$$
$\sum b_{\lam_1,\lam_2, \ldots, \lam_n} x_1^{\lam_1} x_2^{\lam_2} \ldots
x_k^{\lam_k} \in \mathbb{Z}[x_1,x_2,\ldots,x_n]$. Since the mapping $\varphi  $
is the homomorphism of the ring $\mathbb{Z}$ into the ring $E$, then by the
lemma 6 it follows that $\psi$ is the homomorphism of the ring $
\mathbb{Z}[x_1,x_2,\ldots,x_n]$ into the ring $E$. Since $\psi(s_k(x_1, x_2,
\ldots, x_n)) = s_k(\psi(x_1), \psi(x_2), \ldots, \psi(x_n)) = \break =
s_k(y_1,y_2, \ldots, y_n) = (-1)^k a_k,$ $1 \leqslant k \leqslant n,$ then by
applying the homomorpism $\psi$ to the both sides of the polynomial equality in
the ring $\mathbb{Z}[x_1,x_2,\ldots,x_n]$
\begin{multline}
\sum_{i=1}^n x_i^m = (-1)^m \sum_{\substack{\lam_1 + 2\lam_2 + n
\lam_n = m, \\ \lam_i \in \mathbb{Z}, \, \lam_i \geqslant 0,\, 1
\leqslant i \leqslant n }} (-1)^{\lam_1+\lam_2+ \ldots + \lam_n }
\frac{m((\lam_1+\lam_2+
\ldots + \lam_n -1)! )}{\lam_1!\lam_2! \ldots \lam_n! } \times \\
\times \prod_{k=1}^n (s_k(x_1,\ldots,x_n))^{\lam_k}
\end{multline}
we obtain that
\begin{multline*}
\sum_{i=1}^n y_i^m = (-1)^m \sum_{\substack{\lam_1 + 2\lam_2 + n
\lam_n = m, \\ \lam_i \in \mathbb{Z}, \, \lam_i \geqslant 0,\, 1
\leqslant i \leqslant n }} (-1)^{\lam_1+\lam_2+ \ldots + \lam_n }
\frac{m((\lam_1+\lam_2+ \ldots + \lam_n -1)! )}{\lam_1!\lam_2!
\ldots \lam_n! } \times \\ \shoveright{\times \prod_{k=1}^n
(s_k(y_1,\ldots,y_n))^{\lam_k} =} \\
=(-1)^m \sum_{\substack{\lam_1+ 2\lam_2 + n \lam_n = m, \\ \lam_i
\in \mathbb{Z}, \, \lam_i \geqslant 0,\, 1 \leqslant i \leqslant n
}}  (-1)^{\lam_1+\lam_2+ \ldots + \lam_n } \frac{m((\lam_1+\lam_2+
\ldots + \lam_n -1)! )}{\lam_1!\lam_2! \ldots \lam_n!} \prod_{k=1}^n
((-1)^k a_k)^{\lam_k} =
 \\ = \sum_{\substack{\lam_1 + 2\lam_2 + n \lam_n = m, \\ \lam_i \in
\mathbb{Z}, \, \lam_i \geqslant 0,\, 1 \leqslant i \leqslant n }}
(-1)^{\lam_1+\lam_2+ \ldots + \lam_n } \frac{m((\lam_1+\lam_2+
\ldots + \lam_n -1)! )}{\lam_1!\lam_2! \ldots \lam_n! }
\prod_{k=1}^n (a_k)^{\lam_k}
\end{multline*}
Lemma 7 is proved.
\end{proof}

Since $s_k(x_1,x_2,\ldots,x_n,0, \ldots,0) = s_k(x_1,x_2,\ldots,x_n)$ if $k
\leqslant n$, and $s_k(x_1,x_2,\ldots \break \ldots, x_n,0\dots,0)=0$ if $k
> n$ then the equality (13) follows from [5,~(16),~p.~381]. Really,
from [5,~(16),~p.~381] it follows that
\begin{multline*}
\sum_{i=1}^n x_i^m + \sum_{j=1}^m y_j^m = \frac{1}{(-1)^{m-1}(m-1)!}
\sum_{\substack{\lam_1 + 2\lam_2 + m \lam_m = m,
\\ \lam_i \in \mathbb{Z}, \, \lam_i \geqslant 0,\, 1 \leqslant i
\leqslant m }} \frac{m!}{\prod\limits_{i=1}^m
(i!)^{\lam_i}(\lam_i!)} \times \\
\times (-1)^{\lam_1+\lam_2+ \ldots + \lam_m -1} (\lam_1+\lam_2+
\ldots + \lam_m -1)! \prod_{i=1}^m (i!
s_i(x_1,x_2,\ldots, x_n,y_1 \ldots, y_m))^{\lam_i} \!= \\
=(-1)^m \! \!\sum_{\hbox to 40pt{ $ \substack{\lam_1 + 2\lam_2 + m \lam_m = m, \\
\lam_i \in \mathbb{Z}, \, \lam_i \geqslant 0,\, 1 \leqslant i \leqslant m }$}}
(-1)^{\lam_1+\lam_2+ \ldots + \lam_m } \frac{m((\lam_1+\lam_2+ \ldots + \lam_m
-1)! )}{\lam_1!\lam_2! \ldots \lam_m! } \prod_{i=1}^m (s_i(x_1,\ldots,x_n,y_1,
\ldots, y_m ))^{\lam_i}.
\end{multline*}
Setting $y_1 = \ldots = y_m =0,$ we obtain (13).
\begin{lemma}
Let $K$~be a field and $f(x) \! \in \! K[x]$, let $E$~be the
splitting field of $f(x)$ over $K$, $f(x)= \break
=\prod\limits_{i=1}^n(x-x_i),$ $x_i \in E$ $x_i \ne 1$ for all $i, \
1\leqslant i \leqslant n.$ Then
\begin{equation}
\prod_{i=1}^n \ls x- \frac{1}{1-x_i} \rs = (f(1))^{-1} x^n f \ls 1-\frac{1}{x}\rs
\end{equation}
\end{lemma}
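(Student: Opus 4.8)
The plan is to prove the identity as an equality of polynomials in $E[x]$, using the factorization $f(x) = \prod_{i=1}^n (x - x_i)$ available in the splitting field $E$. First I would record that the hypothesis $x_i \neq 1$ for every $i$ forces $f(1) = \prod_{i=1}^n (1 - x_i) \neq 0$, so the scalar $(f(1))^{-1}$ appearing on the right-hand side is well defined. Both sides are visibly monic polynomials of degree $n$ in $x$ (the left one manifestly so), so it suffices to transform the right side into the explicitly factored left side.

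Next I would evaluate the right-hand side directly by substituting $1 - \frac{1}{x}$ into $f$. Since $f$ factors as a product of linear terms, I get $f\ls 1 - \frac{1}{x} \rs = \prod_{i=1}^n \ls 1 - \frac{1}{x} - x_i \rs$. Writing each factor over the common denominator $x$ as $\frac{(1-x_i)x - 1}{x}$ and collecting the $n$ resulting copies of $x^{-1}$ gives
$$
x^n f\ls 1 - \frac{1}{x} \rs = \prod_{i=1}^n \bigl((1-x_i)x - 1\bigr).
$$

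The crucial — and essentially only — step is the factorization $(1-x_i)x - 1 = (1-x_i)\ls x - \frac{1}{1-x_i} \rs$, which is legitimate precisely because $1 - x_i \neq 0$. Pulling the scalar $1 - x_i$ out of the $i$-th factor produces $\bigl(\prod_{i=1}^n (1-x_i)\bigr) \prod_{i=1}^n \ls x - \frac{1}{1-x_i} \rs = f(1) \prod_{i=1}^n \ls x - \frac{1}{1-x_i} \rs$; dividing by $f(1)$ then yields the claimed formula. I expect no genuine obstacle here. The only point requiring a word of care is that $f(1 - 1/x)$ is a priori a rational function, so I would phrase the computation as an identity in the field of fractions $E(x)$ and then observe that both $x^n f(1 - 1/x)$ and the displayed product are actual polynomials, whence the equality holds in $E[x]$; since the stated identity has coefficients determined over $K$, it holds there as well. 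Once the denominators are cleared, the entire argument reduces to the single factorization above, whose validity is exactly guaranteed by the hypothesis $x_i \neq 1$.
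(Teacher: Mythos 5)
Your proof is correct and is essentially the paper's own argument run in reverse: both proofs pass through the same intermediate polynomial $\prod_{i=1}^n\bigl((1-x_i)x-1\bigr)$, relating it to the left-hand side by factoring out the nonzero scalars $1-x_i$ (exactly where $x_i\neq 1$ is used) and to the right-hand side by clearing the denominators $x$ — the paper phrases this last step via $g(x)=\prod_{i=1}^n((1-x_i)x-1)$ and the observation $x^n g(1/x)=f(1-x)$, whereas you substitute $1-\tfrac{1}{x}$ into the factored form of $f$ directly. The only differences are the direction of the computation and your (harmless, slightly more careful) remark about first working in $E(x)$ before concluding a polynomial identity.
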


\begin{proof}
We have,
$$
\prod_{i=1}^n \ls x- \frac{1}{1-x_i} \rs = \ls \prod_{i=1}^n (1- x_i) \rs ^{-1} \cdot
\prod_{i=1}^n ((1- x_i)x-1) = (f(1))^{-1} g(x),
$$
where $g(x)=\prod\limits_{i=1}^n ((1-x_i)x-1).$ But
$$
x^n g \ls \frac{1}{x}\rs = \prod_{i=1}^n (1-x_i-x)=\prod_{i=1}^n ((1-x)-x_i)=f(1-x).
$$
Hence, $g(x)=x^n f \ls 1-\frac{1}{x} \rs$ and
$$
 \prod_{i=1}^n \ls x- \frac{1}{1-x_i} \rs
= (f(1))^{-1} x^n f \ls 1-\frac{1}{x}\rs
$$
\end{proof}
\begin{lemma}
Let $K$~be a field of characteristic $0$ or relatively prime with
$n$, if $char K \ne 0,$ $c \in K, \, c \ne 0,1;$ let $\vep$~be a
primitive root from $1$ of degree $n$, let $\alpha$~be an arbitrary
root of the polynomial $x^n-c,$
$$
f_{n,m}(k)=f_{n,m}(c, \alpha; k) = \sum_{i=0}^{n-1} \frac{\vep^{ik}}{(1-\vep^t
\alpha)^m}, \quad  n \geqslant 1, \, m \in \mathbb{Z}, \ k \in \mathbb{Z}
$$
Then
\begin{equation}
f_{n,m}(k)=\alpha^{-k} \lk (-1)^m {k-1 \choose m-1} n + \sum_{i=0}^{m-1}(-1)^i {k \choose
i} f_{n,m-i}(0) \rk, \, 1 \leqslant k
\leqslant n, \, m \geqslant 1
\end{equation}
\begin{multline}
f_{n,m}(0)=(-1)^m \sum_{\substack{\lam_1 + 2\lam_2 + \ldots + n\lam_n=m \\
\lam_i \in \mathbb{Z}, \, \lam_i \geqslant 0, 1 \leqslant i \leqslant n}}
\frac{m(\lam_1 + \lam_2 + \ldots + \lam_n-1)!}{\lam_1! \lam_2! \ldots \lam_n!}
\ls \frac{1}{c-1} \rs^{\lam_1 + \lam_2 + \ldots + \lam_n} \prod_{i=1}^n {n \choose i}^{\lam_i}, \\ m \geqslant 1
\end{multline}
\begin{gather}
f_{n,m}(0)=\frac{1}{c-1}\lk (-1)^m {n \choose m}m + \sum_{i=1}^{m-1} (-1)^{i} {n \choose i} f_{n,m-i}(0) \rk, \, 1 \leqslant m \leqslant n, \\
f_{n,m}(0)=\frac{1}{c-1}  \sum_{i=1}^{n} (-1)^{i} {n \choose i} f_{n,m-i}(0), \, m \geqslant n+1, \\
\sum_{m=1}^{\infty} (f_{n,m}(0)) x^m = \frac{nx(1-x)^{n-1}}{(1-x)^n-c}, \quad
\sum_{m=0}^{\infty} (f_{n,m}(0)) x^m = \frac{n[(1-x)^{n-1}-c]}{(1-x)^n-c}
\end{gather}
\end{lemma}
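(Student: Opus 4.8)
The plan is to pass from the exponential indexing to the roots of $x^n-c$. Set $\beta_i=\vep^i\alpha$ for $0\leqslant i\leqslant n-1$; these are exactly the $n$ distinct roots of $x^n-c$, and $\vep^{ik}=\alpha^{-k}\beta_i^k$, so that $f_{n,m}(k)=\alpha^{-k}\sum_{i=0}^{n-1}\beta_i^k(1-\beta_i)^{-m}$, and in particular $f_{n,m}(0)=\sum_{i=0}^{n-1}(1-\beta_i)^{-m}$ is the $m$-th power sum of the quantities $\tfrac{1}{1-\beta_i}$. Because $c\neq1$ forces $\beta_i\neq1$, lemma 8 applied to $f(x)=x^n-c$ (with $f(1)=1-c$) shows that the monic polynomial with roots $\tfrac{1}{1-\beta_i}$ is $P(x)=(1-c)^{-1}\lk(x-1)^n-cx^n\rk$, and hence $g(x)=x^nP(1/x)=(1-c)^{-1}\lk(1-x)^n-c\rk$ satisfies $g(x)=\prod_{i=0}^{n-1}\ls1-\tfrac{x}{1-\beta_i}\rs$ with $g(0)=1$.

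Then identities (16) and (19) follow mechanically. For (19) I apply lemma 1 with $\xi_i=\tfrac{1}{1-\beta_i}$: since $g'(x)=-n(1-c)^{-1}(1-x)^{n-1}$, the formula $\sum_{m\geqslant1}f_{n,m}(0)x^m=-xg'(x)(g(x))^{-1}$ gives the first identity of (19), and adding $f_{n,0}(0)=n$ gives the second. For (16) I apply lemma 7 to $P(x)=x^n+\sum_{i=1}^na_ix^{n-i}$, where $a_i=(-1)^i\binom{n}{i}(1-c)^{-1}$; substituting $\prod_ia_i^{\lam_i}=(-1)^m(1-c)^{-(\lam_1+\ldots+\lam_n)}\prod_i\binom{n}{i}^{\lam_i}$ (using $\sum_ii\lam_i=m$) and combining the signs with $(1-c)^{-1}=-(c-1)^{-1}$ reproduces (16) verbatim.

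For the recurrences (17) and (18) I compare coefficients of $x^m$ on the two sides of $S(x)\lk(1-x)^n-c\rk=nx(1-x)^{n-1}$, where $S(x)=\sum_{m\geqslant1}f_{n,m}(0)x^m$ is the series from (19). For $1\leqslant m\leqslant n$ the right-hand side contributes $n(-1)^{m-1}\binom{n-1}{m-1}$, and the identity $n\binom{n-1}{m-1}=m\binom{n}{m}$ turns this into (17); for $m\geqslant n+1$ the right-hand side contributes $0$, which is (18). (Equivalently, these are Newton's identities for $P$.)

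The real work is (15). Expanding $\beta_i^k=\ls(\beta_i-1)+1\rs^k=\sum_{j=0}^k\binom{k}{j}(\beta_i-1)^j$ and using $(\beta_i-1)^j=(-1)^j(1-\beta_i)^j$ gives $f_{n,m}(k)=\alpha^{-k}\sum_{j=0}^k(-1)^j\binom{k}{j}f_{n,m-j}(0)$. Splitting at $j=m$, the terms $0\leqslant j\leqslant m-1$ form the explicit sum in (15). In the tail $m\leqslant j\leqslant k$ the factors are $f_{n,-l}(0)=\sum_i(1-\beta_i)^l$ with $l=j-m$ and $0\leqslant l\leqslant k-m\leqslant n-1$; here the decisive observation is that $\sum_i\beta_i^r=\alpha^r\sum_{i=0}^{n-1}\vep^{ir}$ vanishes for $1\leqslant r\leqslant n-1$ and equals $n$ for $r=0$, whence $\sum_i(1-\beta_i)^l=n$ for every $0\leqslant l\leqslant n-1$. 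Thus the tail equals $n\sum_{j=m}^k(-1)^j\binom{k}{j}$, and the partial alternating sum $\sum_{j=m}^k(-1)^j\binom{k}{j}=-\sum_{j=0}^{m-1}(-1)^j\binom{k}{j}=(-1)^m\binom{k-1}{m-1}$ (valid since $\sum_{j=0}^k(-1)^j\binom{k}{j}=0$ for $k\geqslant1$) yields the term $(-1)^m\binom{k-1}{m-1}n$. The main obstacle is exactly this step: recognizing that the out-of-range factors $f_{n,m-j}(0)$ with $m-j\leqslant0$ collapse to the constant $n$ — which is why the hypothesis $k\leqslant n$, forcing $l\leqslant n-1$ so that only the $r=0$ power sum survives, is indispensable — and then matching the alternating binomial tail to the closed form. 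Throughout, the coprimality hypothesis on $\charr K$ is what guarantees the primitivity of $\vep$ and $n\neq0$, so the root-of-unity sums and the invoked lemmas remain valid in positive characteristic.
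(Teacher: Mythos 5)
Your proof is correct, and for the central identity (15) it takes a genuinely different route from the paper. The paper proves (15) by induction on $k$: it first derives the recurrence $f_{n,m}(k)=\alpha^{-1}\lk f_{n,m}(k-1)-f_{n,m-1}(k-1)\rk$ and then grinds through the inductive step with Pascal's rule, treating $m=1$ and $m\geqslant 2$ separately. You instead expand $\beta_i^k=((\beta_i-1)+1)^k$ directly, obtaining $f_{n,m}(k)=\alpha^{-k}\sum_{j=0}^{k}(-1)^j\binom{k}{j}f_{n,m-j}(0)$ in one stroke, and then collapse the out-of-range tail $j\geqslant m$ using the vanishing of the power sums $\sum_{i}\beta_i^r$ for $1\leqslant r\leqslant n-1$, so that $f_{n,m-j}(0)=\sum_i(1-\beta_i)^{j-m}=n$ whenever $0\leqslant j-m\leqslant n-1$; the partial alternating binomial sum $\sum_{j=m}^{k}(-1)^j\binom{k}{j}=(-1)^m\binom{k-1}{m-1}$ then produces the remaining term of (15). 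This argument is non-inductive, explains where the formula comes from rather than merely verifying a guessed answer, and makes transparent exactly why the hypothesis $k\leqslant n$ is needed (it keeps every tail exponent below $n$), a point the paper's induction obscures. For the remaining items your argument essentially coincides with the paper's: (16) via Lemma 8 composed with Lemma 7, and (19) via Lemma 1 applied to $g(x)=(1-c)^{-1}\lk(1-x)^n-c\rk$; for (17)--(18) you compare coefficients in $S(x)\lk(1-x)^n-c\rk=nx(1-x)^{n-1}$, which carries the same Newton-identity content as the paper's derivation (the paper also observes that (17)--(18) drop out of (15) at $k=n$, since $f_{n,m}(n)=f_{n,m}(0)$ and $\alpha^{-n}=c^{-1}$, with $n\binom{n-1}{m-1}=m\binom{n}{m}$).
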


\begin{proof}
We shall prove the equality (15) by induction on $k, \, 1 \leqslant k \leqslant n.$ Let $k
\in \mathbb{Z},$  $m \in \mathbb{Z}.$ Then
\begin{multline*}
f_{n,m}(k)-\alpha^{-1}f_{n,m}(k-1)=\sum_{i=0}^{n-1}\frac{\vep^{ik}}{(1-\vep^i \alpha)^m}-
\alpha^{-1} \sum_{i=0}^{n-1}\frac{\vep^{i(k-1)}}{(1-\vep^i \alpha)^m}= \\
=\sum_{i=0}^{n-1}\frac{\vep^{i(k-1)}(\vep^i - \alpha^{-1})}{(1-\vep^i \alpha)^m}=
-\alpha^{-1}\sum_{i=0}^{n-1}\frac{\vep^{i(k-1)}(1-\vep^i \alpha)}{(1-\vep^i \alpha)^m}=
-\alpha^{-1}\sum_{i=0}^{n-1}\frac{\vep^{i(k-1)}}{(1-\vep^i \alpha)^{m-1}}= \\
= -\alpha^{-1}f_{n,m-1}(k-1).
\end{multline*}
Hence
\begin{equation}
f_{n,m}(k)= \alpha^{-1}[f_{n,m}(k-1)-f_{n,m-1}(k-1)], \, k \in \mathbb{Z}, \, m \in
\mathbb{Z}.
\end{equation}
Since $f_{n,0}(0)=n,$ then from equality (20) it follows that
the equality (15) is correct for $k=1.$

Case $m=1.$ Since
$$
\sum_{i=0}^{n-1} \vep^{ik} = \frac{1-\vep^{kn}}{1-\vep^k}=0,
$$
if $1 \leqslant k \leqslant n-1,$ then from the equality (20) it follows that
$$
f_{n,1}(k)=\alpha^{-1} \lk f_{n,1}(k-1)-\sum_{i=0}^{n-1} \vep^{i(k-1)} \rk =\begin{cases}
\alpha^{-1}f_{n,1}(k-1), &\text{ if $2 \leqslant k \leqslant n ,$}  \\
\alpha^{-1}(f_{n,1}(0)-n), &\text{ if $k=1.$ }
\end{cases}
$$
It follows that
$$
f_{n,1}(k)=\alpha^{-(k-1)}f_{n,1}(1), \mbox{ if $2 \leqslant k \leqslant n $.}
$$
Therefore $f_{n,1}(k)=\alpha^{-(k-1)}f_{n,1}(1),$ if $1 \leqslant k \leqslant n $. Since
$$
f_{n,1}(1)=\alpha^{-1}(f_{n,1}(0)-n),
$$
then $f_{n,1}(k)=\alpha^{-k}(f_{n,1}(0)-n),$ if $1 \leqslant k \leqslant n $.

Case $m \geqslant 2$. Let $ 2 \leqslant k \leqslant n$. From the equality (20)
and the induction hypothesis for $k-1$:
\begin{gather*}
f_{n,m}(k-1)=\alpha^{-(k-1)} \lk (-1)^m {k-2 \choose m-1}n +
\sum_{i=0}^{m-1} (-1)^i {k-1 \choose i} f_{n,m-i}(0) \rk , \\
f_{n,m-1}(k-1)=\alpha^{-(k-1)} \lk (-1)^{m-1} {k-2 \choose m-2}n + \sum_{i=0}^{m-2}
(-1)^i {k-1 \choose i} f_{n,m-1-i}(0) \rk
\end{gather*}
it follows that
\begin{multline*}
f_{n,m}(k) = \alpha^{-k} \biggl[ (-1)^m {k-2 \choose m-1}n - (-1)^{m-1}{k-2 \choose m-2}n + \\
+ \sum_{i=0}^{m-1} (-1)^i {k-1
\choose i}f_{n,m-i}(0) - \sum_{i=0}^{m-2} (-1)^i {k-1 \choose i} f_{n,m-1-i}(0) \biggr]  = \\
= \alpha^{-k} \biggl[ (-1)^m \ls {k-2 \choose m-1} + {k-2 \choose m-2}
\rs n + \sum_{i=0}^{m-1} (-1)^i {k-1 \choose i} f_{n,m-i}(0) - \\
- \sum_{j=1}^{m-1} (-1)^{j-1} {k-1 \choose j-1} f_{n,m-j}(0) \biggr]
=\alpha^{-k} \biggl[ (-1)^m {k-1 \choose m-1}n + f_{n,m}(0) + \\
+ \sum_{i=1}^{m-1} (-1)^i \ls {k-1 \choose i} + {k-1 \choose i-1} \rs f_{n,m-i}(0) \biggr] = \\
=\alpha^{-k} \lk (-1)^m {k-1 \choose m-1}n + f_{n,m}(0) +
\sum_{i=1}^{m-1} (-1)^i {k \choose i}f_{n,m-i}(0)\rk = \\
=\alpha^{-k} \lk (-1)^m {k-1 \choose m-1}n + \sum_{i=0}^{m-1} (-1)^i {k \choose
i}f_{n,m-i}(0)\rk .
\end{multline*}
From induction principle it follows that the equality (15) is
correct for all  $k$, $1 \leqslant k \leqslant n$, and $m \geqslant
1.$ Let
$$
\varphi(x)=x^n-c= \prod_{i=1}^n(x-x_i)=\prod_{i=1}^n(x- \vep^i \alpha).
$$
From the lemma 8 it follows that
\begin{multline*}
\prod_{j=0}^{n-1} \ls x-\frac{1}{1-\vep^j \alpha} \rs =\prod_{i=1}^{n}\ls
x-\frac{1}{1-\vep^i \alpha} \rs =
(\varphi (1))^{-1}x^n \varphi \ls 1-\frac{1}{x}\rs =\\
=(1-c)^{-1}x^n \lk \ls 1-\frac{1}{x} \rs^n-c \rk =(1-c)^{-1}[(x-1)^n-cx^n]=
x^n+(1-c)^{-1}\sum_{i=1}^n(-1)^i{n \choose i} x^{n-i}.
\end{multline*}
Hence and from lemma 7 it follows that
\begin{multline*}
f_{n,m}(0) = \sum_{i=0}^{n-1} \frac{1}{(1- \vep^i \alpha)^m}=
 \sum_{i=0}^{n-1} \ls \frac{1}{1- \vep^i \alpha} \rs^m =
\sum_{\substack{\lam_1 + 2\lam_2 + \ldots + n\lam_n = m \\
\lam_i \in \mathbb{Z}, \, \lam_i \geqslant 0, \, 1 \leqslant i \leqslant n}}
(-1)^{\lam_1 + \ldots + \lam_n} \times \\
\times \frac{m(\lam_1+ \lam_2 + \ldots + \lam_n-1)!}{\lam_1!
\lam_2! \ldots \lam_n!} \prod_{i=1}^n \lk (1-c)^{-1}(-1)^i {n \choose i} \rk^{\lam_i}=\\
=\sum_{\substack{\lam_1 + 2\lam_2 + \ldots + n\lam_n = m \\
\lam_i \in \mathbb{Z}, \, \lam_i \geqslant 0, \, 1 \leqslant i \leqslant n}} (-1)^{\lam_1 +
\lam_2 + \ldots + \lam_n} \frac{m(\lam_1 + \lam_2 + \ldots +
\lam_n-1)!}{\lam_1! \lam_2! \ldots \lam_n!} \times \\
\times \ls \frac{1}{1-c} \rs ^{\lam_1+ \lam_2 + \ldots + \lam_n}
(-1)^{\lam_1 + 2\lam_2 + \ldots + n\lam_n }\prod_{i=1}^n {n \choose i}^{\lam_i}=\\
=(-1)^m \sum_{\substack{\lam_1 + 2\lam_2 + \ldots + n\lam_n =m \\
\lam_i \in \mathbb{Z}, \, \lam_i \geqslant 0, \, 1 \leqslant i \leqslant n}} \ls
\frac{1}{c-1} \rs^{\lam_1+ \lam_2 + \ldots + \lam_n}
\frac{m(\lam_1 + \lam_2 + \ldots + \lam_n-1)!}{\lam_1! \lam_2!
\ldots \lam_n!} \prod_{i=1}^n {n \choose i}^{\lam_i}.
\end{multline*}
From the equality
\begin{equation}
\prod_{i=0}^{n-1}\ls x- \frac{1}{1- \vep^i \alpha} \rs = x^n + (1-c)^{-1}\sum_{i=1}^n
(-1)^i {n \choose i} x^{n-i}
\end{equation}
and respectively from first and second formulas of Newton it follows the
equalities (17) and (18). How it is no wonder that, the equalities (17) and (18)
also follows directly
 from the equality (15). Really, setting in equality (15) $k=n$, we obtain
\begin{equation}
f_{n,m}(n)=\alpha^{-n} \lk (-1)^m {n-1 \choose m-1} n + f_{n,m} (0) +
\sum_{i=1}^{m-1}(-1)^i {n \choose i} f_{n,m-i}(0) \rk ,
\end{equation}
Since $f_{n,m}(n) = f_{n,m}(0), \alpha^{-n} = c^{-1}$ and ${n-1 \choose m-1}n={n-1
\choose m-1} \frac{n}{m} m = {n \choose m} m,$ then from the equality (22) it follows
 that
$$
cf_{n,m}(0)=(-1)^m {n \choose m} m + f_{n,m}(0) + \sum_{i=1}^{m-1}(-1)^i {n \choose i}
f_{n,m-i}(0),
$$
and therefore
\begin{equation}
f_{n,m}(0)=\frac{1}{c-1} \lk (-1)^m {n \choose m} \cdot m  + \sum_{i=1}^{m-1}(-1)^i {n
\choose i} f_{n,m-i}(0) \rk,
\end{equation}
for all $m \geqslant 1.$

The equalities (17) and (18) are equivalent one equality (23) for all $m \geqslant 1.$
Let
$$
f(x)=\prod_{i=0}^{n-1}\biggl(x-\frac{1}{1-\vep^i \alpha}\biggr).
$$
Then, how we show above, $ f(x)=(1-c)^{-1}[(x-1)^n-cx^n]$ and it follows that
$$
g(x)=x^n f(x^{-1})=(1-c)^{-1}[(1-x)^n-c].
$$
Hence and by the lemma 1 it follows that
$$
\sum_{m=1}^{\infty} f_{n,m} (0) x^m = -x \ls \frac{\dif}{\dif x} g(x) \rs (g(x))^{-1}=
\frac{nx(1-x)^{n-1}}{(1-x)^n - c}.
$$
Since $f_{n,0}(0)=n,$ then
$$
\sum_{m=0}^{\infty} f_{n,m}(0)x^m = n + \sum_{m=1}^{\infty}
f_{n,m}(0)x^m = n + \frac{nx(1-x)^{n-1}}{(1-x)^n - c}=
\frac{n[(1-x)^{n-1}-c]}{(1-x)^n - c}.
$$
\end{proof}
\begin{theorem}
Let $K$~be a field of characteristic $0$ or relatively prime with
$n,$ if $\charr K \ne 0 $. Let $a,b \in K \setminus \{ 0 \}, \,  a
\ne b,$ $\vep$~be a primitive root from~$1$ of degree $n.$ Let
$\beta$~and~$\gamma$~be an arbitrary roots of the polynomials $x^n +
a$ and $x^n + b$ in the splitting field of $(x^n +a)(x^n +b)$ over
$K$, respectively. Let $ x_i= \vep^i \beta$ and $ y_i= \vep^i
\gamma,$ $1 \leqslant i \leqslant n,$ let $\alpha =
\beta^{-1}\gamma, $ let $ f_{n, m}(k)=
\sum\limits_{i=0}^{n-1}\frac{\vep^{ik}}{(1-\vep^i \alpha)^m},$ $ k
\in \mathbb{Z},$ $m \geqslant 1. $ Then
\begin{multline}
\det\ls \ls \frac{1}{(x_i-y_j)^m}\rs_{1 \leqslant i,j \leqslant n} \rs=
(-1)^{mn} a ^{-m} \alpha^{- \frac{n(n-1)}{2}} \times \\
\times f_{n,m}(0) \prod_{k=1}^{n-1} \lk (-1)^m {k-1 \choose m-1}n + \sum_{i=0}^{m-1}
(-1)^i{k \choose i} f_{n, m-i}(0) \rk
\end{multline}
\end{theorem}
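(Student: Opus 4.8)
The plan is to reduce the $m$-th Hadamard power of the specialized Cauchy matrix to a circulant matrix and to read off its determinant from its eigenvalues. First I would exploit $\alpha=\beta^{-1}\gamma$ to factor each entry. Since $x_i-y_j=\vep^i\beta-\vep^j\gamma=\vep^i\beta\,(1-\vep^{j-i}\alpha)$, the $(i,j)$ entry of the matrix equals $(x_i-y_j)^{-m}=\vep^{-im}\beta^{-m}(1-\vep^{j-i}\alpha)^{-m}$. Pulling the scalar $\beta^{-m}$ out of every entry and the scalar $\vep^{-im}$ out of the $i$-th row, multilinearity of the determinant gives
\[
\det\Bigl(\bigl((x_i-y_j)^{-m}\bigr)_{1\leqslant i,j\leqslant n}\Bigr)=\beta^{-mn}\,\vep^{-m\sum_{i=1}^{n}i}\,\det\Bigl(\bigl((1-\vep^{j-i}\alpha)^{-m}\bigr)_{1\leqslant i,j\leqslant n}\Bigr),
\]
so that the row factors collect into $\vep^{-mn(n+1)/2}$.

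The heart of the argument is that the reduced matrix $B=\bigl((1-\vep^{j-i}\alpha)^{-m}\bigr)_{1\leqslant i,j\leqslant n}$ is circulant: because $\vep^n=1$, its entry depends only on $(j-i)\bmod n$. Setting $c_l=(1-\vep^l\alpha)^{-m}$, I would check that the vector $v^{(k)}$ with components $v^{(k)}_j=\vep^{kj}$ is an eigenvector: grouping the sum $\sum_j c_{(j-i)\bmod n}\vep^{kj}$ by the residue $l=(j-i)\bmod n$ yields $(Bv^{(k)})_i=\vep^{ki}\sum_{l=0}^{n-1}c_l\vep^{kl}=f_{n,m}(k)\,v^{(k)}_i$. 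Because $\charr K$ is $0$ or coprime to $n$, the powers $\vep^0,\dots,\vep^{n-1}$ are distinct, so the vectors $v^{(0)},\dots,v^{(n-1)}$ form a basis (a Vandermonde argument) and $B$ is diagonalizable with eigenvalues $f_{n,m}(0),\dots,f_{n,m}(n-1)$. Hence $\det B=\prod_{k=0}^{n-1}f_{n,m}(k)$. This circulant diagonalization is the step I expect to be the only genuine obstacle, since it is exactly where the hypothesis on $\charr K$ cannot be dropped.

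I would then isolate the factor $k=0$ and apply the already-established formula (15) of Lemma 9 to the remaining factors. For $1\leqslant k\leqslant n-1$ that formula reads
\[
f_{n,m}(k)=\alpha^{-k}\Bigl[(-1)^m\binom{k-1}{m-1}n+\sum_{i=0}^{m-1}(-1)^i\binom{k}{i}f_{n,m-i}(0)\Bigr],
\]
so multiplying over $k=1,\dots,n-1$ extracts $\prod_{k=1}^{n-1}\alpha^{-k}=\alpha^{-n(n-1)/2}$ and leaves precisely the product of brackets that appears in the statement, while the $k=0$ term contributes the isolated factor $f_{n,m}(0)$. This gives $\det B=f_{n,m}(0)\,\alpha^{-n(n-1)/2}\prod_{k=1}^{n-1}\bigl[(-1)^m\binom{k-1}{m-1}n+\sum_{i=0}^{m-1}(-1)^i\binom{k}{i}f_{n,m-i}(0)\bigr]$.

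Finally I would evaluate the scalar prefactor $\beta^{-mn}\vep^{-mn(n+1)/2}$ and verify it equals $(-1)^{mn}a^{-m}$, which turns the previous two displays into formula (24). Since $\beta$ is a root of $x^n+a$ we have $\beta^n=-a$, whence $\beta^{-mn}=(-a)^{-m}=(-1)^m a^{-m}$. For the root-of-unity factor I would split on the parity of $n$: if $n$ is odd then $n(n+1)/2$ is a multiple of $n$, so $\vep^{-mn(n+1)/2}=1$ while $(-1)^{mn}=(-1)^m$; if $n$ is even then $\vep^{n/2}=-1$ forces $\vep^{-mn(n+1)/2}=(-1)^m$ while $(-1)^{mn}=1$. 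In either parity $\beta^{-mn}\vep^{-mn(n+1)/2}$ collapses to $(-1)^{mn}a^{-m}$, and substituting the expression for $\det B$ produces exactly the claimed formula (24). The only delicate bookkeeping here is this parity split in the exponent of $\vep$, which is routine once the circulant eigenvalues have been identified.
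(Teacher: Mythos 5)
Your proposal is correct and follows essentially the same route as the paper: factor the row scalars out of $\bigl((x_i-y_j)^{-m}\bigr)$ to reduce to the circulant $\bigl((1-\vep^{j-i}\alpha)^{-m}\bigr)$, evaluate its determinant as $\prod_{k=0}^{n-1}f_{n,m}(k)$, and apply formula (15) of Lemma 9 to the factors with $1\leqslant k\leqslant n-1$ to extract $\alpha^{-n(n-1)/2}$ and the bracketed product. The only cosmetic differences are that you prove the circulant determinant formula via the eigenvectors $(\vep^{kj})_{j}$ where the paper cites it as the known formula for the determinant of a cyclic group matrix, and that you obtain the prefactor $(-1)^{mn}a^{-m}$ by a parity analysis of $\beta^{-mn}\vep^{-mn(n+1)/2}$ where the paper gets it at once from $x_1x_2\cdots x_n=(-1)^na$.
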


\begin{proof}
Carry out from $i$-th row, $1 \leqslant i \leqslant n,$ of matrix $\bigl(\frac{1}{(x_i -
y_j)^m}\bigr)_{1\leqslant i,j \leqslant n}$ factor $x_i^{-m}$ we obtain that
\begin{multline}
\det\ls \ls \frac{1}{(x_i - y_j)^m}\rs_{1 \leqslant i, j \leqslant n} \rs = (x_1 x_2
\ldots x_n)^{-m}\det\ls \ls \frac{1}{(1 -
x_i^{-1}x_j)^m} \rs _{1 \leqslant i, j \leqslant n}\rs =\\
= (x_1 x_2 \ldots x_n)^{-m}\det\ls \ls \frac{1}{(1 - \vep^{j-i} \alpha)^m} \rs_{1
\leqslant i, j \leqslant n}\rs.
\end{multline}

Let $b_k=\frac{1}{(1-\vep^k \alpha)^m}, \, 1\leqslant k\leqslant n.$ Then from (25)
it follows that
\begin{equation}
\det\ls \ls \frac{1}{(x_i - y_j)^m} \rs_{1 \leqslant i,j \leqslant n}\rs = (x_1 x_2
\ldots x_n)^{-m} \det ((b_{(j-i) \mod n})_{1 \leqslant i,j \leqslant n}).
\end{equation}
From well known formula for the determinant group matrix of
finite cyclic group it follows that
\begin{equation}
\det ((b_{(j-i)mod n})_{1 \leqslant i,j \leqslant n})=\prod_{k=1}^n \sum_{i=1}^n
\vep^{ik} b_i.
\end{equation}
Since
$$
x^n+a=\prod_{i=1}^n (x- \vep^{i} \beta) = \prod_{i=1}^{n}(x-x_i),
$$
then $x_1 x_2 \ldots x_n=(-1)^n a$ and from the equalities (26), (27)
and (15) it follows that
\begin{multline*}
\det\ls \ls \frac{1}{(x_i - y_j)^m} \rs_{1 \leqslant i, j \leqslant n} \rs = ((-1)^n
a)^{-m} \prod_{k=1}^n
\sum_{i=1}^n \vep^{ik} b_i=\\
=(-1)^{mn}a^{-m}\prod_{k=1}^n \sum_{i=1}^n \frac{\vep^{ik}}{(1-\vep^i \alpha)^m}
=(-1)^{mn}a^{-m}\prod_{k=1}^n f_{n,m}(k)= (-1)^{mn}a^{-m}\prod_{k=0}^{n-1} f_{n,m}(k)= \\
=(-1)^{mn}a^{-m} f_{n,m}(0) \prod_{k=1}^{n-1} f_{n,m}(k)= \\
=(-1)^{mn} a^{-m}f_{n,m}(0) \prod_{k=1}^{n-1} \alpha^{-k}
\lk (-1)^m {k-1 \choose m-1}n + \sum_{i=0}^{m-1} (-1)^i{k \choose i} f_{n, m-i}(0) \rk= \\
=(-1)^{mn} a^{-m}f_{n,m}(0) \alpha^{-\sum_{k=1}^{n-1}k} \prod_{k=1}^{n-1}
\lk (-1)^m {k-1 \choose m-1}n + \sum_{i=0}^{m-1} (-1)^i{k \choose i} f_{n, m-i}(0) \rk= \\
=(-1)^{mn} a^{-m} \alpha^{-\frac{n(n-1)}{2}} f_{n,m}(0) \prod_{k=1}^{n-1} \lk (-1)^m {k-1
\choose m-1}n + \sum_{i=0}^{m-1} (-1)^i{k \choose i} f_{n, m-i}(0) \rk.
\end{multline*}
Theorem 2 is proved.
\end{proof}

\begin{theorem}
Let $K$~be a field of characteristic $0$ or relatively prime with
$n,$ if $\charr K \ne 0.$ Let $a,b \in K \setminus \{0\}, \, a \ne
b,$ $x_1, \ldots, x_n$ and $y_1, \ldots, y_n$~are the distinct roots
of the polynomials $x^n+a$ and $x^n+b$ in the splitting field of
$(x^n + a)(x^n + b)$ over $K$, respectively. Then
\begin{gather}
\per \ls \ls \frac{1}{x_i-y_j}\rs_{1 \leqslant i,j \leqslant n}\rs =\frac{n}{(b-a)^n}
\prod_{k=1}^{n-1}[na+k(b-a)] =\\
= \frac{n}{(b-a)^n} \prod_{k=1}^{n-1}[nb+k(a-b)] =
\end{gather}
\begin{gather}
 =\begin{cases}
(-1)^{\frac{n-1}{2}} \cfrac{n}{(b-a)^n}
\prod\limits_{k=1}^{\frac{n-1}{2}}[-na-k(b-a)][nb-k(b-a)],
\mbox{if $n \equiv 1 ( \mod 2)$,} \\
\cfrac{n}{2} \cdot \cfrac{n(a+b)}{(b-a)^n}
\prod\limits_{k=1}^{\frac{n}{2}-1}[na+k(b-a)][nb+k(a-b)], \mbox{if $n\equiv 0(\mod 2)$}.
\end{cases}
\end{gather}
\end{theorem}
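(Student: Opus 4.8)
The plan is to obtain $\per(A)$ by dividing the two determinant evaluations furnished by Theorem~2, via Borchardt's classical identity for Cauchy matrices, $\per(A)\det(A)=\det(A^{(2)})$, i.e. $\per(A)=\det(A^{(2)})/\det(A)$. Writing $c=\alpha^n=b/a$ as in Lemma~9, I would first specialise formula (24) at $m=1$ and $m=2$. The recursion (23) gives $f_{n,1}(0)=n/(1-c)$ and $f_{n,2}(0)=n\bigl(1+c(n-1)\bigr)(1-c)^{-2}$. For $m=1$ each bracket in (24) reduces to $-n+f_{n,1}(0)=nc/(1-c)$, so $\det(A)=(-1)^n a^{-1}\alpha^{-n(n-1)/2}\,n^n c^{n-1}(1-c)^{-n}$, which is nonzero because $a,b\neq 0$ and $c\neq 1$; hence the division below is legitimate.

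For $m=2$ the $k$-th bracket in (24) is $(k-1)n+f_{n,2}(0)-kf_{n,1}(0)$, and the decisive step, which I expect to be the main obstacle, is to show this collapses to a single linear factor. Setting $u=1-c$, the bracket equals $\frac{n}{u^2}\bigl[(k-1)u^2-(n+k-1)u+n\bigr]$, and the quadratic in $u$ has discriminant $(n+k-1)^2-4(k-1)n=(n-k+1)^2$, a perfect square; hence it factors as $(u-1)\bigl((k-1)u-n\bigr)$, and the bracket becomes $\frac{nc}{(1-c)^2}\bigl[n-(k-1)(1-c)\bigr]$. Without this perfect-square phenomenon the product over $k$ would not reduce to a single linear progression; with it, reindexing $j=k-1$ yields $\det(A^{(2)})=a^{-2}\alpha^{-n(n-1)/2}f_{n,2}(0)\bigl(\tfrac{nc}{(1-c)^2}\bigr)^{n-1}\prod_{j=0}^{n-2}\bigl[n-j(1-c)\bigr]$.

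Forming the quotient $\det(A^{(2)})/\det(A)$, the factors $\alpha^{-n(n-1)/2}$ and $n^n c^{n-1}$ cancel, leaving $\per(A)=(-1)^n a^{-1}\bigl(1+c(n-1)\bigr)(1-c)^{-n}\prod_{j=0}^{n-2}\bigl[n-j(1-c)\bigr]$. Substituting $c=b/a$, so that $1-c=(a-b)/a$ and $(-1)^n(a-b)^{-n}=(b-a)^{-n}$, and using $n-j(1-c)=\bigl(na+j(b-a)\bigr)/a$ together with $1+c(n-1)=\bigl(a+(n-1)b\bigr)/a$, all powers of $a$ cancel. Pulling the $j=0$ factor $na$ to the front and noting that the $k=n-1$ term of (28) is exactly $na+(n-1)(b-a)=a+(n-1)b$, the expression becomes $\frac{n}{(b-a)^n}\prod_{k=1}^{n-1}[na+k(b-a)]$, which is (28). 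Formula (29) then follows by the substitution $k\mapsto n-k$, under which $na+k(b-a)\mapsto nb+k(a-b)$.

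Finally, to reach the parity forms (30) I would pair the factor of index $k$ with that of index $n-k$ in the product of (28); since $na+(n-k)(b-a)=nb-k(b-a)$, each pair contributes $[na+k(b-a)][nb-k(b-a)]$. For $n$ odd there is no fixed index and there are $(n-1)/2$ pairs; rewriting $na+k(b-a)=-[-na-k(b-a)]$ extracts the sign $(-1)^{(n-1)/2}$ and yields the odd case. For $n$ even the index $k=n/2$ is its own partner and contributes the middle factor $na+\tfrac{n}{2}(b-a)=n(a+b)/2$, while the remaining $(n/2)-1$ pairs give $[na+k(b-a)][nb+k(a-b)]$; collecting the prefactor $\tfrac{n}{2}\cdot\tfrac{n(a+b)}{(b-a)^n}$ gives the even case.
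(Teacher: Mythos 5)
Your proposal is correct and follows essentially the same route as the paper: Borchardt's identity, Theorem 2 specialized at $m=1$ and $m=2$, the Lemma 9 values of $f_{n,1}(0)$ and $f_{n,2}(0)$, and the same $k\mapsto n-k$ pairing to obtain the parity forms. The only differences are cosmetic --- you work in the variable $c=b/a$ and factor the quadratic bracket via its perfect-square discriminant where the paper performs the equivalent simplification directly in $a,b$, and you make explicit the nonvanishing of $\det(A)$ that the paper leaves implicit.
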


\begin{proof}
From the Borchardt's theorem [4, p.18] it follows that
\begin{equation}
\per\ls \ls\frac{1}{x_i-y_j}\rs_{1 \leqslant i,j \leqslant n}\rs \cdot \det\ls
\ls\frac{1}{x_i-y_j}\rs_{1 \leqslant i,j \leqslant n}\rs =\det\ls
\ls\frac{1}{(x_i-y_j)^2}\rs_{1 \leqslant i,j \leqslant n}\rs.
\end{equation}
Since $\per\ls(\frac{1}{x_i-y_j})_{1 \leqslant i,j \leqslant n}\rs$ is independent
from the order of the roots of the polynomials $x^n+a$~and $y^n+b,$ then choose ordering the
same as in theorem 2 and using introduced in the same place notations we obtain from (24)
that
\begin{gather}
\det\ls \ls \frac{1}{x_i-y_j}\rs_{1 \leqslant i,j \leqslant n}\rs =(-1)^n a^{-1}
\alpha^{-\frac{n(n-1)}{2}}f_{n,1}(0)\prod_{k=1}^{n-1}[f_{n,1}(0)-n],\\
\det\ls \ls\frac{1}{(x_i-y_j)^2}\rs_{1 \leqslant i,j \leqslant n} \rs = a^{-2}
\alpha^{\frac{n(n-1)}{2}}f_{n,2}(0)\prod_{k=1}^{n-1}[f_{n,2}(0)-kf_{n,1}(0)+(k-1)n],
\end{gather}
Since
$$
\alpha^n=(\beta^{-1}\gamma)^n=(\beta^n)^{-1} \cdot \gamma^n=(-a)^{-1} \cdot (-b) = a^{-1}
b=c,
$$
then from the equality (16) it follows that
\begin{gather}
f_{n,1}(0)=-\frac{n}{c-1}=-\frac{n}{\frac{b}{a}-1}=-\frac{na}{b-a}, \\
f_{n,2}(0)=\ls \frac{1}{c-1} \rs^2 \cdot {n \choose 1}^2 + 2 \frac{1}{c-1}{n \choose 2} =
\frac{n^2}{(\frac{b}{a}-1)^2}+\frac{n(n-1)}{\frac{b}{a}-1}=\frac{n^2
a^2}{(b-a)^2}+\frac{n(n-1)a}{b-a}
\end{gather}
From the equalities (31) --- (35) it follows that $\per((\frac{1}{x_i-y_j})_{1 \leqslant i,j
\leqslant n})=$
\begin{multline*}
=(-1)^n a^{-1} \frac{\frac{n^2 a^2}{(b-a)^2}+\frac{n(n-1)a}{b-a}}{-\frac{na}{b-a}} \cdot
\prod_{k=1}^{n-1} \frac{\frac{n^2
a^2}{(b-a)^2}+\frac{n(n-1)a}{b-a}+\frac{kna}{b-a}+(k-1)n}
{-\frac{na}{b-a}-n}=\\
=(-1)^{n-1} a^{-1}\lk \frac{na}{b-a}+n-1 \rk \prod_{k=1}^{n-1}
\frac{\frac{na}{b-a} [\frac{na}{b-a}+n-1+k+\frac{(k-1)(b-a)}{a}]}
{-\frac{nb}{b-a}}= \\
 =a^{-1}\ls \frac{a}{b} \rs^{n-1} \cdot
\frac{(n-1)b+a}{b-a}\prod_{k=1}^{n-1} \lk
\ls\frac{na}{b-a}+n-1\rs+k+\frac{(k-1)(b-a)}{a} \rk =
\end{multline*}
\begin{multline*}
 =a^{-1}\ls
\frac{a}{b} \rs^{n-1} \cdot \frac{(n-1)b+a}{b-a}\prod_{k=1}^{n-1}
\lk \frac{nb-(b-a)}{b-a} + \frac{(k-1)b+a}{a} \rk =\\
=a^{-1}\ls \frac{a}{b} \rs^{n-1} \cdot \frac{(n-1)b+a}{b-a}\prod_{k=1}^{n-1}
\lk \frac{nb}{b-a} + \frac{(k-1)b}{a}\rk=\\
=a^{-1}\ls\frac{a}{b}\rs^{n-1} \cdot \frac{(n-1)b+a}{b-a}\prod_{k=1}^{n-1}
\frac{b}{a(b-a)}[na + (k-1)(b-a)]=\\
=a^{-1} \frac{(n-1)b+a}{(b-a)^n}\prod_{k=0}^{n-2} [na+k(b-a)]=
\frac{n}{(b-a)^n}\prod_{k=1}^{n-1}[na+k(b-a)].
\end{multline*}

Since the mapping $\varphi(k)=n-k$ is the bijection of the set
$\{1,2,\ldots,n-1\}$ onto itself, then
$$
\prod_{k=1}^{n-1}[na+k(b-a)]=\prod_{k=1}^{n-1}[na+\varphi (k)(b-a)]=
\prod_{k=1}^{n-1}[na+(n-k)(b-a)]=\prod_{k=1}^{n-1}[nb-k(b-a)].
$$
Let $n \equiv 1 (\mod 2), \, n\geqslant 3.$ Then the mapping
$\varphi(k)=n-k$ is a bijection of the set
$\{1,2,\ldots,\frac{n-1}{2}\}$ onto the set
$\{\frac{n+1}{2},\frac{n+1}{2}+1,\ldots,n-1\}$ and therefore
\begin{multline}
\prod_{k=1}^{n-1}[na+k(b-a)]=\ls \prod_{k=1}^{(n-1)/2}[na+k(b-a)]\rs \cdot
\prod_{k=\frac{n+1}{2}}^{n-1}[na+k(b-a)]= \\
=\ls \prod_{k=1}^{(n-1)/2}[na+k(b-a)] \rs \cdot
\prod_{k=1}^{(n-1)/2}[na+ \varphi (k)(b-a)]= \\
= \ls \prod_{k=1}^{(n-1)/2}[na+k(b-a)]\rs \prod_{k=1}^{(n-1)/2}[nb- k(b-a)]=\\
(-1)^{\frac{n-1}{2}} \prod_{k=1}^{\frac{n-1}{2}}[-na-k(b-a)][nb- k(b-a)].
\end{multline}
Let $n\equiv 0 (\mod 2).$ Then $\varphi(k)=n-k$ is the bijection of
the set $\{1,2,\ldots,\frac{n}{2}-1\}$ on the set
$\{\frac{n}{2}+1,\frac{n}{2}+2,\ldots,n-1\}$ and therefore
\begin{multline*}
\prod_{k=1}^{n-1}[na+k(b-a)]=\ls \prod_{k=1}^{(n/2)-1}[na+k(b-a)]\rs \cdot [na +
\frac{n}{2}(b-a)] \cdot \prod_{k=\frac{n}{2}+1}^{n-1}[na+k(b-a)]=\\
=\prod_{k=1}^{n/2-1}[na+k(b-a)] \cdot \frac{n}{2} (a+b) \cdot \prod_{k=1}^{n/2-1}[na+
\varphi (k)(b-a)]= \frac{n}{2}(a+b) \prod_{k=1}^{n/2-1}[na+k(b-a)][nb-k(b-a)].
\end{multline*}
Theorem 3 is proved.
\end{proof}
Theorem 3 is a direct generalization of a conjecture of R.F.Scott(1881) if $a=-1, \, b=1.$
\begin{theorem}
Let $K$~be a field of characteristic $0$ or relatively prime with $2n$ if
$\charr K \ne 0.$ Let $a\in K \setminus \{0\}$ and $x_1, \ldots, x_n$ and $y_1,
\ldots, y_n$~are distinct roots of the polynomials $x^n+a$ and $y^n-a$in the
splitting field of $(x^n+a)(x^n-a)$ over $K$, respectively. Then
$$
\per\ls \ls \frac{1}{x_i-y_j} \rs_{1 \leqslant i,j \leqslant n}\rs =\begin{cases}
(-1)^{\frac{n+1}{2}}\frac{n}{2^n a} \ls \prod_{k=1}^{(n-1)/2}(n-2k)\rs ^2, &\text{if
$ n \equiv 1(\mod 2)$} \\
0, &\text{if $n \equiv 0 (\mod 2)$}.
\end{cases}
$$
\end{theorem}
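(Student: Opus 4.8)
The plan is to obtain Theorem~4 as the specialization $b=-a$ of Theorem~3, followed by an elementary simplification. First I would verify the hypotheses: the assumption that $\charr K=0$ or $\charr K$ is coprime with $2n$ guarantees at once that $\charr K$ is coprime with $n$ (so Theorem~3 applies) and that $2a\neq 0$, that is $a\neq -a$, which is exactly the requirement $a\neq b$ of Theorem~3 after setting $b=-a$. Moreover $a\neq 0$ forces $b=-a\neq 0$, so every standing assumption of Theorem~3 is met, and the roots $y_1,\ldots,y_n$ of $y^n-a=y^n+b$ coincide with the $y_j$ of Theorem~3.

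For even $n$ the quickest route is the case-split formula~(30). Its even branch carries the factor $n(a+b)$, which becomes $n(a-a)=0$ once $b=-a$; hence $\per\bigl((x_i-y_j)^{-1}\bigr)=0$, settling $n\equiv 0\pmod 2$ in one line.

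For odd $n$ I would substitute $b=-a$ into the odd branch of~(30), namely $(-1)^{(n-1)/2}\frac{n}{(b-a)^n}\prod_{k=1}^{(n-1)/2}[-na-k(b-a)][nb-k(b-a)]$. With $b-a=-2a$ each of the two brackets of a pair collapses to $a(2k-n)$, so the pair contributes $a^2(n-2k)^2$. Collecting the $(n-1)/2$ factors of $a^2$ against $(b-a)^n=(-2a)^n=-2^n a^n$ (using that $n$ is odd) leaves a single $a^{-1}$, while the factor $-2^{-n}$ turns the leading sign $(-1)^{(n-1)/2}$ into $(-1)^{(n+1)/2}$. This produces exactly $(-1)^{(n+1)/2}\frac{n}{2^n a}\bigl(\prod_{k=1}^{(n-1)/2}(n-2k)\bigr)^2$, as claimed.

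The argument is routine, and the only delicate point is the bookkeeping of signs and of the powers of $2$ and $a$; in particular one must track that $(-2a)^n=-2^n a^n$ for odd $n$, which is what converts $(-1)^{(n-1)/2}$ into $(-1)^{(n+1)/2}$. As a cross-check one can instead start from the unsplit form~(28), $\per=\frac{n}{(b-a)^n}\prod_{k=1}^{n-1}[na+k(b-a)]$, where $na+k(b-a)=a(n-2k)$: the index $k=n/2$ makes the product vanish when $n$ is even, while pairing $k$ with $n-k$ for $n$ odd recovers the perfect square $(-1)^{(n-1)/2}\bigl(\prod_{k=1}^{(n-1)/2}(n-2k)\bigr)^2$ and yields the same answer.
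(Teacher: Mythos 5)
Your proposal is correct and follows essentially the same route as the paper: the paper's own proof of Theorem~4 consists precisely of setting $b=-a$ in the case-split formula~(30) of Theorem~3, with the even case killed by the factor $n(a+b)$ and the odd case simplified via $(-2a)^n=-2^na^n$ to turn $(-1)^{(n-1)/2}$ into $(-1)^{(n+1)/2}$. Your explicit verification of the hypotheses of Theorem~3 (that $\charr K$ coprime to $2n$ gives $a\ne -a$) and the cross-check from formula~(28) are welcome additions, but the core argument is identical.
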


\begin{proof}
Setting in equality (30) $b=-a$ we obtain that
\begin{gather*}
\per \ls \ls \frac{1}{x_i-y_j}\rs _{1 \leqslant i,j \leqslant n}\rs =\begin{cases}
(-1)^{\frac{n-1}{2}}\frac{n(-a)^{n-1}}{(-2a)^n}\prod_{k=1}^{(n-1)/2}(n-2k)^2, &\text{if
$ n \equiv 1(\mod 2)$} \\
0, &\text{if $n \equiv 0 (\mod 2)$}.
\end{cases} \\
=\begin{cases}
(-1)^{\frac{n+1}{2}}\frac{n}{2^n a} \ls \prod_{k=1}^{(n-1)/2}(n-2k)\rs^2, &\text{if $ n \equiv 1(\mod 2)$} \\
0, &\text{if $n \equiv 0 (\mod 2)$}.
\end{cases}
\end{gather*}
\end{proof}


\begin{theorem}
Let $f(x)$ and $g(x)$~be separable polynomials over a field $K$ and
$x_1, \ldots, x_n$ and $y_1, \ldots, y_n$~are the distinct roots of
the polynomials $f(x)$ and $g(x)$, respectively, in the splitting
field $E$ of the polynomial $f(x)g(x)$ over the field $K$. Assume
that $m \leqslant n$ and the polynomials $f(x)$ and $g(x)$ are
relatively prime.
Then $\per\ls(\frac{1}{(x_i-y_j)^k})_{\substack{1 \leqslant i \leqslant m \\
1\leqslant j \leqslant n}} \rs \in K $ for all $k\in \mathbb{Z}$.
\end{theorem}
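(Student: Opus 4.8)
The plan is a Galois-descent argument resting on one elementary fact: the permanent of a rectangular matrix is unchanged when its rows are permuted among themselves and its columns are permuted among themselves. So the strategy is to show that $P:=\per\big((x_i-y_j)^{-k}\big)_{1\le i\le m,\,1\le j\le n}$ lies in $E$ and is fixed by every automorphism of $E$ over $K$, whence $P\in K$. First I would check that $P$ is a well-defined element of $E$: since $f$ and $g$ are relatively prime they share no root, so $x_i-y_j\ne 0$ for all $i,j$, and thus every entry $(x_i-y_j)^{-k}$ belongs to $E$ (for $k\le 0$ it is even a polynomial in the roots). Hence the finite sum of products defining the permanent lies in $E$ as well.

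Next, because $f$ and $g$ are separable and coprime, the product $fg$ has distinct roots and is therefore separable; its splitting field $E$ is consequently a Galois extension of $K$. Put $G=\Gal(E/K)$. Every $\tau\in G$ fixes $K$ pointwise, so for a root $x_i$ of $f$ we get $f(\tau(x_i))=\tau(f(x_i))=0$, i.e. $\tau$ maps the root set of $f$ bijectively onto itself; likewise it permutes the roots of $g$. Write $\tau(x_i)=x_{\pi(i)}$ and $\tau(y_j)=y_{\rho(j)}$ for the induced permutations $\pi$ of the row indices and $\rho$ of the column indices.

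Now I would apply $\tau$ to $P$ term by term, which is legitimate since $\tau$ is a field automorphism. Writing $P=\sum_{\sigma}\prod_{i=1}^m (x_i-y_{\sigma(i)})^{-k}$, where $\sigma$ ranges over the injections $\{1,\dots,m\}\hookrightarrow\{1,\dots,n\}$, this gives $\tau(P)=\sum_{\sigma}\prod_{i=1}^m (x_{\pi(i)}-y_{\rho(\sigma(i))})^{-k}$, which is exactly the permanent of the matrix obtained from $(x_i-y_j)^{-k}$ by permuting the rows through $\pi$ and the columns through $\rho$. Replacing $\sigma$ by $\rho^{-1}\sigma$ absorbs $\rho$, and relabelling the $m$ rows by $\pi^{-1}$ absorbs $\pi$ (in each case the reindexing is again a bijection of the set of injections), so this permanent equals $P$. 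Thus $\tau(P)=P$ for every $\tau\in G$, and since $E/K$ is Galois we conclude $P\in E^{G}=K$ for every $k\in\mathbb{Z}$.

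The one step that carries all the weight — and the place to be careful — is the $G$-stability of the row set $\{x_1,\dots,x_m\}$ and the column set $\{y_1,\dots,y_n\}$: the argument works precisely because each of them is the \emph{complete} set of roots of a polynomial over $K$, namely $f$ and $g$. If the rows were only a proper subset of the roots of some larger $K$-polynomial, then some $\tau$ could carry a row index outside $\{1,\dots,m\}$, the induced $\pi$ would fail to be a permutation of the rows, and the permanent need not lie in $K$. Consequently the hypothesis must be read so that $x_1,\dots,x_m$ exhaust the roots of $f$; granting that, everything else is just the bookkeeping of the separate row and column symmetry of the permanent, and the coprimality of $f$ and $g$ enters only to keep the entries finite.
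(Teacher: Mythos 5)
Your proof is correct and takes essentially the same route as the paper's: both observe that each $\sigma\in\Gal(E/K)$ permutes the row variables and the column variables separately, that the permanent of a rectangular matrix is invariant under such row and column permutations, and then conclude $\per(A)\in K$ by Galois descent. Your closing caveat is worth noting: the paper's proof simply asserts $\{\sigma(x_1),\ldots,\sigma(x_m)\}=\{x_1,\ldots,x_m\}$, which (since the statement declares $x_1,\ldots,x_n$ to be the roots of $f$ while only $m$ of them index rows) is unjustified as written and is exactly the point you repair by reading the hypothesis so that $x_1,\ldots,x_m$ exhaust the roots of $f$ --- otherwise the claim fails, e.g. for $f=x^2-2$, $g=x^2-3$, $m=1$, $n=2$, $k=1$, where the permanent is $-2\sqrt{2}\notin\mathbb{Q}$.
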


\begin{proof}
Let $a_{i,j}=\frac{1}{(x_i-y_j)^k}, \, 1 \leqslant i \leqslant m, \,
1 \leqslant j \leqslant n, \, A=(a_{i,j})_{\substack{1 \leqslant i
\leqslant m \\ 1 \leqslant j \leqslant n}}.$ Let $\Gal(E/K)$~be the
Galois group of all $K$~---~automorphisms of the field $E$ and
$\sigma \in \Gal(E/K).$ Since
$$
\{\sigma(x_1), \ldots, \sigma(x_m) \}=\{ x_1, \ldots, x_m \}, \,  \{\sigma(y_1), \ldots,
\sigma(y_n) \}=\{ y_1, \ldots, y_n \},
$$
then there exists substitutions $\varphi \in \SSym(m),$ $\psi \in
\SSym(n)$ such that
$$
\sigma(x_i)=x_{\varphi(i)}, \, \sigma(y_j)=y_{\psi(j)}, \, 1\leqslant i \leqslant m,\, 1
\leqslant j\leqslant n ,
$$
where $\SSym(n)$~is symmetric group of substitutions of degree~$n$. Therefore
\begin{multline*}
\sigma(\per(A))=\sigma \ls \per \ls \ls \frac{1}{(x_i - y_j)^k}\rs_{\substack{1 \leqslant i \leqslant m \\
1 \leqslant j \leqslant n}} \rs \rs = \per \ls \ls \frac{1}{(\sigma(x_i) -
\sigma(y_j))^k}\rs_{\substack{1 \leqslant i \leqslant m \\ 1 \leqslant
 j \leqslant n}}\rs =\\
  \per\ls \ls \frac{1}{(x_{\varphi(i)} - y_{\psi(j)})^k} \rs_{\substack{1 \leqslant i \leqslant m \\ 1 \leqslant
 j \leqslant n}}\rs = \per((a_{\varphi(i),\psi(j)})_{\substack{1 \leqslant i \leqslant m \\ 1 \leqslant
 j \leqslant n}})=\per(A)
\end{multline*}
So, for all $\sigma \in \Gal(E/K)$
\begin{equation}
\sigma(\per(A))=\per(A).
\end{equation}
Since the elements of the field $E$ which are invariant relatively all
$K$---automorphisms of the group
$\Gal(E/K)$ of the field $E$ belong to the field $K,$ then from (37) it
follows that $\per(A)
\in K.$

Theorem 5 is proved.
\end{proof}

\begin{theorem}
Let $f(x)$ and $g(x)$~be separable polynomials of degree $n$ over
the field $K$ and $x_1, \ldots, x_n$ and $y_1, \ldots, y_n$~are the
distinct roots of the polynomials $f(x)$ and $g(x)$, respectively,
in the splitting field $E$ of the polynomial $f(x)g(x)$ over the
field $K$. Assume that the polynomials $f(x)$ and $g(x)$ are
relatively prime. Then $ \bigl( \det \bigl( \bigl(
\frac{1}{(x_i-y_j)^k} \bigr)_{1 \leqslant i, j \leqslant n} \bigr)
\bigl)^2 \in K $ for all $k\in \mathbb{Z}$.
\end{theorem}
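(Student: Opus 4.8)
The plan is to mimic the Galois-descent argument of Theorem~5, the only new feature being a sign that must be accounted for. Write $a_{i,j} = \frac{1}{(x_i-y_j)^k}$ and $A=(a_{i,j})_{1\leqslant i,j\leqslant n}$; these entries are well defined and lie in $E$ because $f$ and $g$, being relatively prime, share no root, so $x_i\ne y_j$. Since $f$ and $g$ are separable and relatively prime, their product $f(x)g(x)$ has distinct roots and is therefore separable, whence its splitting field $E$ is a Galois extension of $K$. I will use the fact, already invoked in Theorem~5, that the subfield of $E$ fixed by every element of $\Gal(E/K)$ is exactly $K$.

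First I would fix $\sigma\in\Gal(E/K)$. As in Theorem~5, $\sigma$ permutes $\{x_1,\ldots,x_n\}$ and $\{y_1,\ldots,y_n\}$ among themselves, so there are permutations $\varphi,\psi\in\SSym(n)$ with $\sigma(x_i)=x_{\varphi(i)}$ and $\sigma(y_j)=y_{\psi(j)}$. Applying $\sigma$ entrywise gives
\[
\sigma(\det(A)) = \det\ls\ls\frac{1}{(\sigma(x_i)-\sigma(y_j))^k}\rs_{1\leqslant i,j\leqslant n}\rs = \det\bigl((a_{\varphi(i),\psi(j)})_{1\leqslant i,j\leqslant n}\bigr).
\]

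The key step is to recognize the matrix $(a_{\varphi(i),\psi(j)})$ as the result of permuting the rows of $A$ by $\varphi$ and its columns by $\psi$. Expanding the determinant over $\SSym(n)$ and reindexing the summation permutation as $\rho=\psi\tau\varphi^{-1}$ shows that this simultaneous row–column permutation multiplies the determinant by $\sgn(\varphi)\sgn(\psi)$, so that
\[
\sigma(\det(A)) = \sgn(\varphi)\,\sgn(\psi)\,\det(A).
\]
Squaring removes the sign, since $(\sgn(\varphi)\sgn(\psi))^2=1$; hence $\sigma\bigl(\det(A)^2\bigr)=\det(A)^2$ for every $\sigma\in\Gal(E/K)$. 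Because $E/K$ is Galois, the element $\det(A)^2$, being fixed by the whole Galois group, lies in $K$, which is the assertion.

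I expect the only point requiring care to be the bookkeeping of the sign in the middle step: tracking that a simultaneous row permutation $\varphi$ and column permutation $\psi$ contributes exactly $\sgn(\varphi)\sgn(\psi)$, independent of the entries. This is precisely where the present theorem diverges from the permanent version (Theorem~5), where the symmetry of $\per$ under row and column permutations leaves $\per(A)$ itself invariant, so no squaring is needed; here the antisymmetry of $\det$ forces us to pass to $\det(A)^2$. Everything else is the standard fixed-field characterization of a Galois extension.
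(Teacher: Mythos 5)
Your proposal is correct and follows essentially the same route as the paper: apply $\sigma\in\Gal(E/K)$ entrywise, observe that it acts by a row permutation $\varphi$ and a column permutation $\psi$, conclude $\sigma(\det(A))=(\sgn\varphi)(\sgn\psi)\det(A)=\pm\det(A)$, and then square to kill the sign and invoke the fixed-field property of the Galois extension. Your added justifications (that $x_i\ne y_j$ since $f$ and $g$ are coprime, and that $fg$ is separable so $E/K$ is Galois) are details the paper leaves implicit, but the argument is the same.
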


\begin{proof}
In the notations of the proof of the theorem 5 we have
\begin{multline*}
\sigma(\det(A))=\sigma \ls \det \ls \ls \frac{1}{(x_i - y_j)^k} \rs_{1 \leqslant i, j
\leqslant n} \rs \rs = \det \ls \ls \frac{1}{(\sigma(x_i) -
\sigma(y_j))^k}\rs_{1 \leqslant i, j \leqslant n} \rs= \\
 =\det \ls \ls \frac{1}{(x_{\varphi(i)} - y_{\psi(j)})^k}\rs_{1 \leqslant i, j \leqslant n}\rs
 = \det((a_{\varphi(i),\psi(j)})_{1 \leqslant i, j \leqslant n})= \\
= (\sgn \varphi)(\sgn \psi)\det((a_{i,j})_{1 \leqslant i, j \leqslant n})
 =(\sgn \varphi)(\sgn \psi)\det(A)
\end{multline*}
So, for all $\sigma \in \Gal(E/K)$
\begin{equation}
\sigma(\det(A))=\pm \det(A).
\end{equation}
From (38) it follows that $ \sigma((\det(A))^2)=(\det(A))^2$ for all
$\sigma \in \Gal(E/K)$ and therefore $(\det(A))^2\in K$

Theorem 6 is proved.
\end{proof}

\begin{lemma}
Let $m \geqslant 1, \, n \geqslant m+1, \ x_1, \ldots, x_n,\ y_1,
\ldots, y_n$ be elements in a commutative ring,
$a_{i,j}=(x_i-y_j)^m, \, A_n^{(m)} = (a_{i,j})_{1 \leqslant i,j
\leqslant n}.$ Then
\begin{equation}
 \det(A_n^{(m)}) = \begin{cases} \ls \prod_{i=0}^{n-1}{n-1 \choose i}\rs \prod_{1\leqslant
i<j\leqslant n}(x_j-x_i)(y_j-y_i),
&\text{if $n=m+1,$}\\
0, &\text{if $n \geqslant m+2$}
\end{cases}
\end{equation}
\end{lemma}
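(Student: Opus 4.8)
The plan is to exhibit an explicit rank factorisation of $A_n^{(m)}$ coming from the binomial expansion of its entries, and then read off the determinant from the shapes of the factors. By the binomial theorem,
\[
(x_i-y_j)^m=\sum_{k=0}^{m}\binom{m}{k}(-1)^{m-k}x_i^{\,k}y_j^{\,m-k},
\]
so if I set $U=(x_i^{\,k})_{\substack{1\leqslant i\leqslant n\\ 0\leqslant k\leqslant m}}$ (an $n\times(m+1)$ matrix), $D=\mathrm{diag}\bigl(\binom{m}{0},\binom{m}{1},\ldots,\binom{m}{m}\bigr)$, and $V=\bigl((-1)^{m-k}y_j^{\,m-k}\bigr)_{\substack{0\leqslant k\leqslant m\\ 1\leqslant j\leqslant n}}$ (an $(m+1)\times n$ matrix), then $A_n^{(m)}=UDV$ as a matrix identity over the ring. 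This factorisation uses no division and is valid over an arbitrary commutative ring.

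First I would dispose of the vanishing case $n\geqslant m+2$. Writing $A_n^{(m)}=(UD)V$ as a product of an $n\times(m+1)$ matrix with an $(m+1)\times n$ matrix, the Cauchy--Binet formula --- which holds over any commutative ring --- expresses $\det(A_n^{(m)})$ as a sum over $n$-element subsets of the inner index set $\{0,1,\ldots,m\}$. Since that set has only $m+1<n$ elements, there are no such subsets and the sum is empty, whence $\det(A_n^{(m)})=0$.

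Next comes the case $n=m+1$, where $U,D,V$ are all square $n\times n$ matrices and the determinant is multiplicative: $\det(A_n^{(m)})=\det(U)\det(D)\det(V)$. Here $U$ is the Vandermonde matrix in $x_1,\ldots,x_n$, so $\det(U)=\prod_{1\leqslant i<j\leqslant n}(x_j-x_i)$; and $\det(D)=\prod_{k=0}^{m}\binom{m}{k}=\prod_{i=0}^{n-1}\binom{n-1}{i}$, which is precisely the combinatorial prefactor in the statement. The factor $\det(V)$ requires a short sign computation: reversing the order of the $n$ rows of $V$ (replacing the row index $k$ by $m-k$) introduces a factor $(-1)^{n(n-1)/2}$ and turns $V$ into $\bigl((-1)^{l}y_j^{\,l}\bigr)_{\substack{0\leqslant l\leqslant n-1\\ 1\leqslant j\leqslant n}}$; pulling the scalar $(-1)^l$ out of row $l$ contributes a second factor $(-1)^{n(n-1)/2}$ and leaves the standard Vandermonde matrix in $y_1,\ldots,y_n$. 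The two sign factors cancel, giving $\det(V)=\prod_{1\leqslant i<j\leqslant n}(y_j-y_i)$. Multiplying the three determinants yields the claimed formula.

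The only genuinely delicate points are bookkeeping ones: keeping the Cauchy--Binet index set correct so that the case $n\geqslant m+2$ collapses to an empty sum, and checking that the two occurrences of $(-1)^{n(n-1)/2}$ in $\det(V)$ cancel rather than reinforce. Since the factorisation and the Cauchy--Binet identity are universal over commutative rings, no passage to a field is required; alternatively, one may prove the identity for independent variables $x_i,y_j$ over $\mathbb{Z}$ and then specialise through a ring homomorphism exactly as in Lemma 6.
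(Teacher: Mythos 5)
Your proof is correct, and it rests on the same core idea as the paper's: expand $(x_i-y_j)^m$ by the binomial theorem to get a low-rank factorization of $A_n^{(m)}$ whose factors are (weighted) Vandermonde matrices. The difference is in the packaging, and it is a difference that matters for the stated generality. The paper forces the factorization to be square: it writes $A_n^{(m)} = B_{n,m}C_{n,m}$ with $b_{i,j}=\binom{m}{j-1}x_i^{j-1}$ and $c_{i,j}=(-y_j)^{m-i+1}$, using the vanishing of $\binom{m}{l}$ for $l>m$ to absorb the extra columns; the case $n\geqslant m+2$ then falls out because $\det(B_{n,m})$ contains a factor $\binom{m}{i}=0$. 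But this padding has a cost: when $n\geqslant m+2$ the entries $(-y_j)^{m-i+1}$ of $C_{n,m}$ carry negative exponents, and the paper's evaluation of $\det(C_{n,m})$ factors out $(-y_j)^m$ and manipulates $y_j^{-1}$, so as written it requires the $y_j$ to be invertible (or a detour through indeterminates and specialization), which sits awkwardly with the hypothesis that the $x_i,y_j$ are elements of an arbitrary commutative ring. Your rectangular factorization $A_n^{(m)}=UDV$ with inner dimension $m+1$ avoids both issues: Cauchy--Binet is a polynomial identity with integer coefficients, hence valid over any commutative ring, and with the empty-sum convention it kills the case $n\geqslant m+2$ outright; and your inverse-free computation of $\det(V)$ by row reversal plus extraction of the signs $(-1)^l$, with the two factors $(-1)^{n(n-1)/2}$ cancelling, is valid verbatim over the ring. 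So your route is the more robust one at the stated level of generality, while the paper's square packaging buys a single closed formula (its equation (41)) covering both cases at once, at the price of implicit invertibility assumptions.
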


\begin{proof}
Let $b_{i,j} = {m \choose j-1}x_i^{j-1}, c_{i,j}=(-y_j)^{m-i+1}, \, 1 \leqslant i, j
\leqslant n,$
$$
B_{n,m}=(b_{i,j})_{1\leqslant i, j \leqslant n}, \, C_{n,m}=(c_{i,j})_{1\leqslant i, j
\leqslant n}.
$$
Since $m\leqslant n-1,$ that
\begin{multline*}
\sum_{k=1}^n b_{i,k} c_{k,j}=\sum_{k=1}^n  {m \choose k-1}x_i^{k-1} (-y_j)^{m-k+1}=
\sum_{l=0}^{n-1}  {m \choose l} x_i^{l} (-y_j)^{m-l}= \\
=\sum_{l=0}^{m}  {m \choose l} x_i^{l} (-y_j)^{m-l}=(x_i-y_j)^m=a_{i,j}.
\end{multline*}
Hence, $A_n^{(m)}=B_{n,m} \cdot C_{n,m}$ and therefore
\begin{equation}
\det(A_n^{(m)}) = \det(B_{n,m}) \cdot \det(C_{n,m})
\end{equation}
Since
\begin{equation*}
\det(B_{n,m}) = \ls \prod_{j=1}^n {m \choose j-1}\rs \det((x_i^{j-1})_{1 \leqslant i,j
\leqslant n})=
\ls \prod_{i=0}^{n-1}{m \choose i}\rs \cdot \prod_{1 \leqslant i < j \leqslant n}(x_j-x_i),
\end{equation*}
\begin{multline*}
\det(C_{n,m})=(-y_1)^m (-y_2)^m \ldots (-y_n)^m \cdot \prod_{1
\leqslant i < j \leqslant n}((-y_j^{-1})
- (-y_i^{-1})) = \\
=\ls \prod_{i=1}^n(-y_i)^m \rs \cdot \ls \prod_{1 \leqslant i < j \leqslant n} (y_j -
y_i) \rs \cdot \prod_{1 \leqslant
i < j \leqslant n}(y_i y_j)^{-1} = \\
=\ls \prod_{i=1}^n (-y_i)^m \rs \cdot \ls \prod_{1\leqslant i < j \leqslant n} (y_j - y_i)\rs \cdot \ls \prod_{i=1}^{n} y_i^{n-1} \rs ^{-1}=\\
=(-1)^{mn}\ls \prod_{i=1}^n y_i^{m-n+1}\rs \cdot \prod_{1\leqslant i < j \leqslant n}
(y_j - y_i),
\end{multline*}
then from (40) it follows that
\begin{equation}
\det(A_n^{(m)})=(-1)^{mn}\ls \prod_{i=0}^{n-1} {m \choose i} \rs \cdot \ls \prod_{i=1}^n
y_i^{m-n+1} \rs \cdot \prod_{1\leqslant i < j \leqslant n} (x_j-x_i)(y_j - y_i).
\end{equation}
The equality (39) follows directly from (41).
\end{proof}

\begin{lemma}
Let $n \geqslant m+1, \, m \geqslant 1.$ Then
\begin{equation}
\rank(((x_i - y_j)^m)_{1 \leqslant i,j \leqslant n}) = m+1 \quad
\mbox{ if $x_i \ne x_j, \, y_i \ne y_j$}
\end{equation}
for all $i,j, \, 1 \leqslant i < j \leqslant n.$
\end{lemma}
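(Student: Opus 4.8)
The plan is to read off both the upper and the lower bound for the rank directly from Lemma 10, which was evidently designed for exactly this purpose: it evaluates $\det(A_N^{(m)})$ for the difference-power matrix built from any $N$ values $x_i$ and $y_j$, returning $0$ once $N \geqslant m+2$ and a nonzero Vandermonde-type product when $N = m+1$. The key observation is that every square submatrix of $((x_i-y_j)^m)_{1\leqslant i,j\leqslant n}$ is \emph{itself} a matrix of the same form, assembled from the chosen rows and columns, so each of its minors is governed by Lemma 10.

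First I would establish $\rank \leqslant m+1$. Pick any rows $i_1<\dots<i_{m+2}$ and any columns $j_1<\dots<j_{m+2}$. The resulting $(m+2)\times(m+2)$ submatrix has entries $(x_{i_s}-y_{j_t})^m$, so it is precisely $A_{m+2}^{(m)}$ formed from $x_{i_1},\dots,x_{i_{m+2}}$ and $y_{j_1},\dots,y_{j_{m+2}}$. Since $m+2\geqslant m+2$, the case $n\geqslant m+2$ of Lemma 10 shows its determinant vanishes. As every $(m+2)$-minor is zero, the rank is at most $m+1$.

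Next I would establish $\rank \geqslant m+1$ by exhibiting one nonvanishing $(m+1)$-minor, namely the leading submatrix on rows and columns $1,\dots,m+1$. It equals $A_{m+1}^{(m)}$ built from $x_1,\dots,x_{m+1}$ and $y_1,\dots,y_{m+1}$, and the case $n=m+1$ of Lemma 10 evaluates its determinant as $\bigl(\prod_{i=0}^{m}\binom{m}{i}\bigr)\prod_{1\leqslant s<t\leqslant m+1}(x_t-x_s)(y_t-y_s)$. The Vandermonde factors are nonzero precisely because the $x_i$ are pairwise distinct and the $y_j$ are pairwise distinct, which is the standing hypothesis. Hence this minor is nonzero and the rank is at least $m+1$. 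Combining the two bounds yields $\rank(((x_i-y_j)^m)_{1\leqslant i,j\leqslant n})=m+1$.

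The one point that needs care, and the main obstacle, is the scalar factor $\prod_{i=0}^{m}\binom{m}{i}$ in the lower-bound minor: it must not vanish in $K$. In characteristic $0$ this is automatic, and more generally it holds whenever $\charr K > m$, since then no $\binom{m}{i}$, a quotient of integers not exceeding $m$, is divisible by $\charr K$. Were some interior binomial coefficient to vanish in $K$ the corresponding column of the Cauchy--Binet factorization $(x_i-y_j)^m=\sum_{l=0}^{m}\binom{m}{l}x_i^l(-y_j)^{m-l}$ would collapse and the statement would actually fail, so I would make the hypothesis $\charr K = 0$ (equivalently $\charr K > m$) explicit; under it the argument above is complete.
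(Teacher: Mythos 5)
Your proof is correct and is essentially the paper's own argument: the paper proves this lemma with the single line ``follows directly from the equality (39)'', i.e.\ from Lemma~10, and your two bounds --- every $(m+2)\times(m+2)$ minor is itself a matrix $A_{m+2}^{(m)}$ and hence vanishes, while the leading $(m+1)\times(m+1)$ minor equals $\bigl(\prod_{i=0}^{m}\binom{m}{i}\bigr)\prod_{s<t}(x_t-x_s)(y_t-y_s)\ne 0$ --- are precisely the details it leaves implicit. Your closing remark that one must assume $\charr K=0$ (or $\charr K>m$) so that the binomial product does not vanish is a genuine refinement of the paper's statement, which is silent on the ambient ring and indeed fails, e.g., for $(x_i-y_j)^p$ in characteristic $p$.
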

Proof follows directly from the equality (39).

From the lemma 11 it follows that the theorem of Carlitz-Levine [4, p.19]
is nonapplicable to a matrix
$((x_i - y_j)^{-m})_{1 \leqslant i, j \leqslant n}$ if $m \geqslant 2$,
 $n \geqslant m+1$ and $x_i \ne y_j$, $1 \leqslant i,j \leqslant n$, $x_i \ne x_j$,
$y_i \ne y_j$, $1 \leqslant i< j \leqslant n$.

\begin{lemma}
Let $K$~be a field of characteristic 0, $\varphi (x) \in K[[x]], \,
\varphi(0) =1, \, \exp(x)=\sum\limits_{n=0}^{\infty}\frac{x^n}{n!}.$
Then
\begin{equation}
\exp (\log (\varphi (x)))= \varphi (x).
\end{equation}
\end{lemma}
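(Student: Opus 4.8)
The plan is to reduce the identity to a first-order differential equation for formal power series, exploiting the derivative formulas already established in the excerpt. First I would check that the composition $\exp(\log(\varphi(x)))$ is well defined: since $\varphi(0)=1$, the series $\log(\varphi(x)) = -\sum_{m=1}^{\infty}\frac{(1-\varphi(x))^m}{m}$ has zero constant term, so substituting it into $\exp$ yields a formal power series in which every coefficient is a finite sum. In particular its constant term is $\exp(0)=1$.

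Next I would record the chain rule for the exponential. For any $h(x)\in K[[x]]$ with $h(0)=0$, differentiating $\exp(h(x))=\sum_{n=0}^{\infty}\frac{(h(x))^n}{n!}$ term by term gives
\[
\frac{\dif}{\dif x}\exp(h(x)) = \sum_{n=1}^{\infty}\frac{n(h(x))^{n-1}}{n!}\frac{\dif}{\dif x}h(x) = \ls\frac{\dif}{\dif x}h(x)\rs\exp(h(x)).
\]
Applying this with $h(x)=\log(\varphi(x))$ and using the identity $\frac{\dif}{\dif x}\log(\varphi(x)) = (\varphi(x))^{-1}\frac{\dif}{\dif x}\varphi(x)$ established just after Lemma~2, I obtain, writing $\psi(x)=\exp(\log(\varphi(x)))$,
\[
\frac{\dif}{\dif x}\psi(x) = \psi(x)\,(\varphi(x))^{-1}\frac{\dif}{\dif x}\varphi(x).
\]

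Then I would consider the product $\psi(x)(\varphi(x))^{-1}$, which is a legitimate formal power series because $\varphi(0)=1$ is invertible. Applying the product rule and substituting the displayed differential equation for $\frac{\dif}{\dif x}\psi(x)$ yields
\[
\frac{\dif}{\dif x}\ls\psi(x)(\varphi(x))^{-1}\rs = \ls\frac{\dif}{\dif x}\psi(x)\rs(\varphi(x))^{-1} - \psi(x)(\varphi(x))^{-2}\frac{\dif}{\dif x}\varphi(x) = 0.
\]
The concluding step invokes the characteristic-zero hypothesis: a formal power series over a field of characteristic $0$ whose derivative vanishes must be constant. Hence $\psi(x)(\varphi(x))^{-1}$ equals its constant term $\psi(0)\varphi(0)^{-1}=1$, and therefore $\psi(x)=\varphi(x)$, which is the claimed equality $\exp(\log(\varphi(x)))=\varphi(x)$.

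I expect the only points needing genuine care to be the justification of the term-by-term differentiation of the composite series $\exp(h(x))$ and the implication ``vanishing derivative forces constancy.'' Both rest essentially on the characteristic being $0$ — indeed the latter fails in positive characteristic, since $x^p$ has zero derivative there — which is exactly why the hypothesis $\charr K = 0$ is imposed; everything else is a routine manipulation of formal power series relying on results already proved in the excerpt.
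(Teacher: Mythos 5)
Your proposal is correct and follows essentially the same route as the paper's own proof: both set $\psi(x)=\exp(\log(\varphi(x)))$, derive the differential equation $\psi'=\psi\,\varphi^{-1}\varphi'$ via the chain rule and the identity $\frac{\dif}{\dif x}\log(\varphi(x))=(\varphi(x))^{-1}\frac{\dif}{\dif x}\varphi(x)$, show that $\frac{\dif}{\dif x}\bigl(\psi(x)(\varphi(x))^{-1}\bigr)=0$, and conclude from constancy that $\psi=\varphi$. Your version is in fact slightly more careful than the paper's, as you explicitly justify the well-definedness of the composition, the term-by-term differentiation of $\exp(h(x))$, and the role of $\charr K=0$ in the step ``zero derivative implies constant.''
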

\begin{proof}
Let $F(x)=\exp (\log (\varphi (x)))$. Since $\frac{\dif}{\dif x}
\exp (x)= \exp (x),$ then
\begin{multline*}
\frac{\dif} {\dif x} F(x) = \frac{\dif}{\dif x} \exp (\log (\varphi
(x)))= \exp (\log (\varphi (x))) \frac{\dif}{\dif x} \log (\varphi
(x)) = F(x) (\varphi (x))^{-1}\frac{\dif}{\dif x} \varphi (x).
\end{multline*}
 $F(x) \frac{\dif}{\dif x} \varphi (x) - (\frac{\dif}{\dif x}F(x)) \varphi (x) = 0$.
Hence
$$
\frac{\dif}{\dif x}(F(x) (\varphi (x))^{-1})= \lk
\ls\frac{\dif}{\dif x} F(x)\rs \varphi (x) - F(x) \frac{\dif}{\dif
x} \varphi (x) \rk (\varphi (x))^{-2}=0
$$
 and therefore  $F(x)(\varphi (x))^{-1}=F(0)(\varphi (0))^{-1}=1, \, F(x)=\varphi(x).$
\end{proof}
\begin{lemma}
Let $K$~be a field of a characteristic $0$, $E$~be th splitting
field of $f(x)$ over $K$, $f(x) \in K[x]$,
$$
f(x) = x^n + \sum_{k=1}^n a_kx^{n-k} = \prod_{i=1}^n(x-x_i), \; x_i
\in E, \; 1 \leqslant i \leqslant n.
$$
Let $a_k=0$ for all $ k \geqslant n+1.$ Then for all $k \geqslant 1$
\begin{equation}
a_k = \sum_{\lam_1+2\lam_2+ \ldots + k \lam_k = k}\frac{(-1)^{\lam_1
+ \lam_2 + \ldots + \lam_k}} {1^{\lam_1} 2^{\lam_2} \ldots
k^{\lam_k} \lam_1! \lam_2! \ldots \lam_k!} \prod_{i=1}^{k}(x_1^i+
x_2^i + \ldots + x_n^i )^{\lam_i}.
\end{equation}
\end{lemma}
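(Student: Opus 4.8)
The plan is to recognize the formula (45) as the Waring-type expression of the coefficients $a_k$ of $f$ in terms of the power sums $p_i = x_1^i + x_2^i + \cdots + x_n^i$, and to derive it from the exponential--logarithm machinery already assembled in Lemmas 1, 3 and 12. First I would pass from $f$ to its reciprocal series $g(x) = x^n f(x^{-1}) = \prod_{i=1}^n (1 - x_i x) = 1 + \sum_{k=1}^n a_k x^k$. Since $a_k = 0$ for $k \geqslant n+1$ by hypothesis, I may treat $g$ as the formal power series $g(x) = \sum_{k=0}^\infty a_k x^k$ with $g(0) = a_0 = 1$, so that the assertion (45) becomes the statement that $a_k = \Coef_{x^k} g(x)$ has the stated value.

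Next I would compute $\log g(x)$. By Lemma 3 applied to the factors $\varphi_i(x) = 1 - x_i x$ one has $\log g(x) = \sum_{i=1}^n \log(1 - x_i x)$, and expanding each logarithm as $\log(1 - x_i x) = -\sum_{m=1}^\infty \frac{(x_i x)^m}{m}$ and interchanging the formally legitimate summations gives $\log g(x) = -\sum_{m=1}^\infty \frac{p_m}{m} x^m$; this is exactly the computation carried out in step 3) of the proof of Lemma 7. Then, by Lemma 12, $g(x) = \exp(\log g(x)) = \exp\bigl(-\sum_{m \geqslant 1} \frac{p_m}{m} x^m\bigr)$.

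Writing $h(x) = \log g(x) = \sum_{i \geqslant 1} c_i x^i$ with $c_i = -p_i / i$, I would expand $\exp(h(x)) = \sum_{N \geqslant 0} \frac{1}{N!} h(x)^N$; since $h(0) = 0$, the series $h(x)^N$ has lowest degree at least $N$, so only $N \leqslant k$ contribute to the coefficient of $x^k$ and the expansion is well defined. Extracting $\Coef_{x^k} h(x)^N$ by the multinomial theorem yields $\sum \frac{N!}{\lam_1! \cdots \lam_k!} \prod_i c_i^{\lam_i}$, summed over $\lam_i \geqslant 0$ with $\sum_i \lam_i = N$ and $\sum_i i\lam_i = k$. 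Summing over $N$ collapses the factor $N!$ against the $1/N!$ and leaves $a_k = \sum_{\sum_i i\lam_i = k} \frac{1}{\prod_i \lam_i!} \prod_i c_i^{\lam_i}$; substituting $c_i = -p_i/i$ produces the sign $(-1)^{\lam_1 + \cdots + \lam_k}$ and the factor $(1^{\lam_1} 2^{\lam_2} \cdots k^{\lam_k})^{-1}$ together with $\prod_{i=1}^k p_i^{\lam_i}$, which is precisely (45), once one notes that the constraint $\sum_i i\lam_i = k$ forces $\lam_i = 0$ for $i > k$.

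The only genuinely delicate point is the combinatorial bookkeeping in the last paragraph: one must verify that grouping equal exponents in $h(x)^N$ produces exactly the multinomial coefficient $N!/(\lam_1! \cdots \lam_k!)$ and that this cancels against $1/N!$, so that the final expression no longer depends on $N$. Everything else---the passage to $g$, the logarithm expansion, and the identity $\exp \log = \mathrm{id}$---is supplied almost verbatim by the earlier lemmas, so I expect the argument to be short once the coefficient extraction is performed carefully.
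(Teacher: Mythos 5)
Your proposal is correct and follows essentially the same route as the paper's own proof: form $\varphi(x)=x^n f(x^{-1})=\prod_{i=1}^n(1-x_i x)$, use Lemma 3 and the series for $\log(1-x_i x)$ to get $\log\varphi(x)=-\sum_{m\geqslant 1}\frac{p_m}{m}x^m$, invoke Lemma 12 to write $\varphi=\exp(\log\varphi)$, and extract $\Coef_{x^k}$ by the multinomial theorem. The only differences are cosmetic (you collect the signs via $c_i=-p_i/i$ at the end, while the paper carries $(-1)^s$ through the sum over $s$).
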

\begin{proof}
Let
$$
\varphi (x) = x^n f(x^{-1})= 1+ \sum_{k=1}^n a_k x^k = 1+ \sum_{k=1}^{\infty} a_k x^k.
$$
Then $\varphi (x) = \prod\limits_{i=1}^n (1-x x_i)$ and from
lemma 3 it follows that
\begin{multline*}
\exp (\log( \varphi(x))) = \sum_{s=0}^{\infty} \frac{1}{s!} \ls \log
\ls \prod_{j=1}^n
(1-x_j x)\rs \rs^s = \\
=\sum_{s=0}^{\infty} \frac{1}{s!} \ls \sum_{j=1}^n \log (1-x_j x)\rs
^s = \sum_{s=0}^{\infty} \frac{1}{s!} \ls -\sum_{j=1}^n
\sum_{i=1}^{\infty} \ls \frac{(x_j x)^i}{i} \rs \rs ^s.
\end{multline*}
Hence and from the lemma 12 it follows that
\begin{equation}
1+\sum_{k=1}^{\infty} a_k x^k = \sum_{s=0}^{\infty} \frac{1}{s!} \ls
-\sum_{j=1}^n \sum_{i=1}^{\infty} \frac{x_j^i}{i} x^i \rs ^s =
\sum_{s=0}^{\infty} \frac{1}{s!} \ls -\sum_{i=1}^{\infty}
\frac{1}{i} \ls \sum_{j=1}^n x_j^i \rs x^i \rs ^s.
\end{equation}
From the equality (45) it follows that
\begin{multline*}
a_k=\Coef\limits_{x^k} \sum_{s=1}^k \frac{1}{s!} \ls -\sum_{i=1}^k \frac{1}{i} \ls \sum_{j=1}^n x_j^i \rs x^i \rs^s = \\
=\sum_{s=1}^k \frac{(-1)^s}{s!} \sum_{\substack{\lam_1+\lam_2+\ldots+\lam_k=s \\
\lam_1+2\lam_2+\ldots+k\lam_k=k \\ \lambda_i \in \mathbb{Z}, \, \lambda_i \geqslant 0, \,
1 \leqslant i \leqslant k }}
\frac{s!}{\lam_1! \lam_2! \ldots \lam_k!}  \ls \prod_{i=1}^k \ls \frac{1}{i} \sum_{j=1}^n x_j^i \rs ^{\lam_i} \rs =\\
=\sum_{\substack{\lam_1+2\lam_2+\ldots+k\lam_k=k \\ \lambda_i \in \mathbb{Z}, \, \lambda_i \geqslant 0, \,
1 \leqslant i \leqslant k }}
\frac{(-1)^{\lam_1+\lam_2+\ldots+\lam_k}}{\lam_1!
\lam_2! \ldots \lam_k!} \prod_{i=1}^k \frac{1}{i^{\lam_i}} \ls \sum_{j=1}^n x_j^i \rs^{\lam_i}= \\
= \sum_{\substack{\lam_1+2\lam_2+ \ldots + k \lam_k = k \\ \lambda_i \in \mathbb{Z}, \, \lambda_i \geqslant 0, \,
1 \leqslant i \leqslant k }}\frac{(-1)^{\lam_1 +
\lam_2 + \ldots + \lam_k}} {1^{\lam_1} 2^{\lam_2} \ldots k^{\lam_k}
\lam_1! \lam_2! \ldots \lam_k!} \prod_{i=1}^{k}(x_1^i+ x_2^i +
\ldots + x_n^i )^{\lam_i}.
\end{multline*}

\end{proof}

\begin{lemma}
Let $A$~be a square matrix of order $n$ over a field of
characteristic $0$,
$$
\det(xI_n-A)=x^n+\sum_{k=1}^n a_k x^{n-k}.
$$
Then for all $k \geqslant 1$
\begin{equation}
a_k=\sum_{ \substack{\lam_1+2\lam_2+ \ldots + k \lam_k = k \\ \lambda_i \in \mathbb{Z}, \, \lambda_i \geqslant 0, \,
1 \leqslant i \leqslant k }}\frac{(-1)^{\lam_1 +
\lam_2 + \ldots + \lam_k } }{1^{\lam_1} 2^{\lam_2} \ldots k^{\lam_k}
\lam_1! \lam_2! \ldots \lam_k!} \prod_{i=1}^{k} (\Tr (A^i))^{\lam_i}
\end{equation}
\end{lemma}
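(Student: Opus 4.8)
The plan is to reduce Lemma 15 to Lemma 14 by taking the characteristic polynomial of $A$ as the polynomial $f(x)$ and the eigenvalues of $A$ as its roots. First I would set $f(x)=\det(xI_n-A)\in K[x]$; this is a monic polynomial of degree $n$ whose coefficients are exactly $1,a_1,\ldots,a_n$, and setting $a_k=0$ for $k\geqslant n+1$ matches the normalization required in Lemma 14. Let $E$ be the splitting field of $f(x)$ over $K$ and write $f(x)=\prod_{i=1}^n(x-x_i)$ with $x_i\in E$; thus $x_1,\ldots,x_n$ are the eigenvalues of $A$ counted with multiplicity, and $f(x)$ together with these roots satisfies every hypothesis of Lemma 14, since $\charr K=0$ is assumed.

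The one step that is not purely formal is the identity $\Tr(A^i)=\sum_{j=1}^n x_j^i$ for every $i\geqslant 1$, and this is where I expect to spend the most care. Over the splitting field $E$ the characteristic polynomial of $A$ splits completely, so $A$ is similar over $E$ to an upper-triangular matrix $T$ whose diagonal entries are $x_1,\ldots,x_n$. Then $T^i$ is again upper-triangular with diagonal entries $x_1^i,\ldots,x_n^i$, and since the trace is invariant under conjugation and is unchanged by passage from $K$ to $E$, I obtain $\Tr(A^i)=\Tr(T^i)=\sum_{j=1}^n x_j^i$, i.e.\ the $i$-th power sum of the roots of $f(x)$.

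With this identity in hand the conclusion is immediate. Lemma 14, applied to $f(x)=\det(xI_n-A)$ and to its roots $x_1,\ldots,x_n\in E$, gives for every $k\geqslant 1$
$$
a_k=\sum_{\lam_1+2\lam_2+\ldots+k\lam_k=k}\frac{(-1)^{\lam_1+\lam_2+\ldots+\lam_k}}{1^{\lam_1}2^{\lam_2}\ldots k^{\lam_k}\,\lam_1!\lam_2!\ldots\lam_k!}\prod_{i=1}^k\Bigl(x_1^i+x_2^i+\ldots+x_n^i\Bigr)^{\lam_i}.
$$
Substituting $x_1^i+x_2^i+\ldots+x_n^i=\Tr(A^i)$ into each factor of the product turns this into precisely the asserted formula for $a_k$. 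The only hypotheses to verify are that $f$ is monic of degree $n$ and that $\charr K=0$, both of which hold, so Lemma 14 applies verbatim and the remaining manipulations are routine.
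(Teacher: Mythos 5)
Your proof is correct and takes essentially the same approach as the paper: the paper's own proof also takes $x_1,\ldots,x_n$ to be the eigenvalues of $A$, observes that $\det(xI_n-A)=\prod_{i=1}^n(x-x_i)$ and $\Tr(A^i)=\sum_{j=1}^n x_j^i$, and then applies the preceding lemma (formula (44)). The only difference is that you justify the trace identity explicitly by triangularizing $A$ over the splitting field, a step the paper asserts without proof.
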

\begin{proof}
Let $x_1, x_2, \ldots, x_n$~be the eigenvalues of the matrix $A$.
Then
$$
\det (xI_n-A)=\prod_{i=1}^n(x-x_i), \quad \Tr(A^i)=\sum_{j=1}^nx_j^i
$$
for all $i \geqslant 0$
 and the equality (46) follows from (44).
\end{proof}
\begin{lemma}
Let $x_1, x_2, \ldots, x_n$~ be independent variables over the
rational numbers field~$\mathbb{Q},$ $s_0=1, $
$$
s_k=s_k(x_1, x_2, \ldots, x_n)=\sum_{1 \leqslant i_1 <i_2< \ldots <
i_n \leqslant n} x_{i_1} x_{i_2} \ldots x_{i_k}, \quad 1 \leqslant k
\leqslant n,
$$
$ s_l=s_l(x_1, x_2, \ldots, x_n)=0$, if $l\geqslant n+1.$ Then for
all $k \geqslant 1$
\begin{equation}
s_k=(-1)^k \sum_{\substack{\lam_1+2\lam_2+ \ldots + k \lam_k =
k \\ \lambda_i \in \mathbb{Z}, \, \lambda_i \geqslant 0, \,
1 \leqslant i \leqslant k }}\frac{(-1)^{\lam_1 + \lam_2 + \ldots + \lam_k}} {1^{\lam_1}
2^{\lam_2} \ldots k^{\lam_k} \lam_1! \lam_2! \ldots \lam_k!}
\prod_{i=1}^{k}(x_1^i+ x_2^i + \ldots + x_n^i )^{\lam_i}
\end{equation}
\end{lemma}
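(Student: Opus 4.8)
The plan is to obtain this final lemma as an immediate corollary of the earlier Lemma 13 (equation (44)), using only the standard relation between the elementary symmetric polynomials $s_k$ and the coefficients of the monic polynomial having the $x_i$ as roots. First I would take $K=\mathbb{Q}(x_1,x_2,\ldots,x_n)$, a field of characteristic $0$, and form $f(x)=\prod_{i=1}^n(x-x_i)\in K[x]$. Since the independent variables $x_i$ are by construction the roots of $f$ and already lie in $K$, the splitting field $E$ of $f$ over $K$ equals $K$ itself, so every hypothesis of Lemma 13 is met.

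Next I would expand $f(x)=\prod_{i=1}^n(x-x_i)=\sum_{k=0}^n(-1)^k s_k(x_1,\ldots,x_n)\,x^{n-k}$ by Vieta's formulas and match this against the form $f(x)=x^n+\sum_{k=1}^n a_k x^{n-k}$ required in Lemma 13. Comparing coefficients gives $a_k=(-1)^k s_k$ for $1\leqslant k\leqslant n$. Moreover $s_l=0$ for $l\geqslant n+1$ by the stated convention, while Lemma 13 sets $a_l=0$ for $l\geqslant n+1$; hence the relation $a_k=(-1)^k s_k$ in fact persists for every $k\geqslant 1$.

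Finally I would invoke equation (44), whose right-hand side is already written entirely in terms of the power sums $x_1^i+x_2^i+\cdots+x_n^i$, and multiply both sides by $(-1)^k$. Because $s_k=(-1)^k a_k$, this produces precisely the asserted formula (47), the outer factor $(-1)^k$ being supplied exactly by the conversion from $a_k$ to $s_k$ (the summand, including its internal sign $(-1)^{\lambda_1+\cdots+\lambda_k}$, is identical in (44) and (47)). There is essentially no genuine obstacle here beyond the sign bookkeeping; the only point deserving care is checking that $a_k=(-1)^k s_k$ still holds for $k>n$, where both sides vanish, so that the conclusion of Lemma 13 transports to all $k\geqslant 1$. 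Alternatively one could reprove the statement from scratch by the generating-function method of Lemma 13, replacing $\varphi(x)=x^n f(x^{-1})$ by $\prod_{i=1}^n(1-x_i x)=\sum_{k=0}^n(-1)^k s_k x^k$ and expanding $\varphi=\exp(\log\varphi)$ through Lemmas 3 and 12, but invoking Lemma 13 directly is the shortest route.
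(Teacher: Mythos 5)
Your proof is correct and takes essentially the same route as the paper: the paper also obtains (47) directly from Lemma 13 (equation (44)) via the Vieta expansion $\prod_{i=1}^n(x-x_i)=x^n+\sum_{k=1}^n(-1)^k s_k(x_1,\ldots,x_n)\,x^{n-k}$, i.e.\ the identification $a_k=(-1)^k s_k$. Your explicit check that the relation persists for $k>n$ (both sides vanishing) is a detail the paper leaves implicit but is the same argument.
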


\begin{proof}
Since
$$
\prod_{i=1}^n(x-x_i)=x^n+ \sum_{k=1}^n (-1)^k s_k(x_{i_1} x_{i_2}
\ldots x_{i_n})x^{n-k},
$$
then (47) follows directly from (44) and (45).
\end{proof}

\vspace{2em}

{\Large\bf References}\\
$[1]$ R.~F.~Scott. Mathematical notes, Messenger of Math.10:
\,pp.142-149 (1881) \\
$[2]$ H.~Minc. On a Conjecture of R.\,F\,Scott(1881), Linear
Algebra and its Applications 28(1979), pp.141-153.\\
$[3]$ R.~Kittappa. Proof of a Conjecture of 1881 on Permanents,
Linear and Multilinear Algebra, 1981, vol.~10, pp.75-82.\\
$[4]$ H.~Minc. Permanents, Addison-Wesley, Reading, 1978.\\
$[5]$ P.~Doubilet. On the Foundations of Combinational Theory. VII:
Symmetric Functions through the Theory of Distribution and
Occupancy, Studies in Applied
Mathematics, vol.1,~No.~4, 1972, pp.377 --- 396.\\
$[6]$ P.~Doubilet, G.-C.~Rota, R.~Stanley. On the foundations of
combinatorial theory~(VI): The idea of generating function, Proc.
Sixth Berkeley Symp. Math.
Stat. and Prob., vol.2, University of California Press,~1973,~pp.267 --- 318. \\
$[7]$ G.-C.~Rota. On the foundations of combinatorial theory. I.
Theory of M\"{o}bius functions, Z.~Wahrscheinlichkeitstheorie und
Verw.
Gebiete,~2,~1964,~pp.340 --- 368. \\
$[8]$ E.~A.~Bender, J.~R.~Goldman. On the applications of M\"{o}bius
inversion in
combinatorial analysis. The American Math Monthly,~82,~No.~8(1975),~pp.789 --- 803.\\
$[9]$ Waring, Meditationes algebraicae, Cantabrigiae 1670, 1(15,16).   \\
$[10]$ A.~M.~Kamenetsky. Permanents and determinants of group
matrices. Explicit formulas and recursion relations for permanents
and determinants of circulants. Solution of Lehmer's problem
evaluation coefficients of determinants of general circulants
// Deposited at VINITI 14.08.1990, No. 4620~---~V90, 499p. (in Russian)
\end{document}